\let\seq=\subseteq
\def \t {\times}
   \def \rar {\rightarrow}
\def \mc {\mathcal}
\def\Res{\text{RES}}
\def\inn{\text{{\rm in}}}
\def\out{\text{{\rm out}}}
\def\up{\operatorname{Up}}
\def \res{\operatorname{res}}
\def \red {\textcolor{red}}
\def \blue {\textcolor{blue}}
\newtheorem{defn}{Definition}[section]
\newtheorem{prop}[defn]{Proposition}
\newtheorem{lem}[defn]{Lemma}
\newtheorem{thm}[defn]{Theorem}
\newtheorem{example}{Example}[section]
\newenvironment{exam}{\begin{example}\normalfont}{\end{example}}
\newtheorem{remark}[defn]{Remark}
\newenvironment{rem}{\begin{remark}\normalfont}{\end{remark}}
\newtheorem{coro}[defn]{Corollary}
\begin{document}
\baselineskip1.1 \baselineskip

\title{A Graph Isomorphism Condition and Equivalence of Reaction Systems}

\author{Daniela Genova}
\address[D. Genova]{Department of Mathematics and Statistics, University of North Florida, Jacksonville, USA}
\email{d.genova@unf.edu}

\author{Hendrik Jan Hoogeboom}
\address[H.J. Hoogeboom]{LIACS, Leiden University, the Netherlands}
\email{h.j.hoogeboom@liacs.leidenuniv.nl}

\author{Nata\v sa Jonoska}\thanks{This work has been supported in part by the NSF grant CCF-1526485
 and NIH grant R01 GM109459.}
\address[N. Jonoska]{Department of Mathematics and Statistics, University of South Florida, Tampa,  USA}
\email{jonoska@math.usf.edu}

\date{\today}
\begin{abstract}
We consider global dynamics of reaction systems as introduced by Ehrenfeucht and Rozenberg. The dynamics is represented by a directed graph, the so-called transition graph, and two reaction systems are considered equivalent if their corresponding transition graphs are isomorphic. We introduce the notion of a skeleton (a one-out graph) that uniquely defines a directed graph.  We
 provide the necessary and sufficient conditions for two skeletons to define isomorphic graphs. 
 This provides a necessary and sufficient condition for two reactions systems to be equivalent, as well as a characterization of the directed graphs that correspond to the global dynamics of reaction systems. 
\end{abstract}

\maketitle

{\small \noindent {\it keywords}: {directed graphs; graph isomorphism; graphs on posets; dynamics of reaction systems; equivalence of reaction systems}}

%%%%%%%%%%%%%%%
\section{Introduction}
%%%%%%%%%%%%%%%
Determining whether two graphs are isomorphic is one of the archetypical problems in graph theory and plays an important role in many applications and
network analysis problems. Although there have been
significant advances for this problem in the past year~\cite{graph-iso}, the problem remains difficult.
On the other side, often in network analysis, graphs are partitioned in so called `modules' where
each vertex in a module is adjacent to the same set of vertices outside the module~\cite{def-modules}. Modules in
directed graphs are defined as sets of vertices that have 
 incoming and outgoing edges from, and to, the same vertices
outside the module and it is shown that modular decompositions can be performed in linear
 time~\cite{dir-modules}. In this paper we consider a variation to this notion, i.e., we consider vertices that have the ``same'' incoming edges, and we call such vertices ``companions''. These
 vertices are precisely those that belong to the same region in the Venn  diagram constructed out of
 the family of out-sets (an out-set for $v$ is the set of vertices that have incoming edges starting
  at $v$). We further define a ``skeleton'' of a graph $G=(V,E)$ as a one-out graph over a set $V$ such that  the set of vertices that have non-zero in-degree are representatives of the family of
  out-sets. A skeleton defines uniquely a directed graph and we characterize skeletons of
  isomorphic graphs. Skeletons of isomorphic graphs are called ``companion skeletons''. In particular, skeleton edges swapped at companion vertices produce companion skeletons. This
  observation allows characterizations of reaction systems (described below)
  that exhibit the same global dynamical behavior.

A formal description of biochemical interactions within a confined region bounded with a porous membrane that can interact with the environment has been introduced in \cite{RS}, see \cite{tour-rs} for an overview of the theory.
This formal model, called ``reaction systems'', is based on the idea that each reaction depends on presence of a compound of enzymes, or facilitators, and absence of any other control substance that inhibits the process.
It is assumed further that the reaction is enabled only if the region contains all of the enabling ingredients and none of the inhibitors. In addition, if some ingredients are present in the system, the model allows their presence to be  sufficient to enable all reactions where they participate.
Formally a reaction is modeled as a triple of sets (reactants, inhibitors, results) while the
 reaction system then represents a set of such triples.
In each step, the system produces resulting elements according to the set of reactants that are enabled.
It is further assumed that there is a universal set of elements that can enter the system from the outside environment and interact with the reactants at any given time.
Several studies have addressed the question of the dynamics of the system (the step by step changes of the states of the system), such as
reachability~\cite{rs-reachability}, convergence~\cite{rs-converge},
fixed points and cycles~\cite{rs-fixed,rs-cycle}. It has been observed that the complexity of deciding
existence of certain dynamical properties falls within PSPACE (reachability) or NP-completness (fixed points and fixed point attractors).
In all of these studies, however, the changes in the dynamics through inclusion of new elements entering from the outside environment has not been considered.
We call this condition of no outside involvement within the system as a $0$-context reaction system. In this paper
we study the relationship between the dynamics of the $0$-context reaction systems and the global dynamics of the reaction system that depends on the environmental context. We observe that
quite different  dynamical properties of $0$-context reaction systems produce equivalent global dynamics.

We represent the dynamics of a reaction system as a directed graph where each vertex is a state of the system represented as a set of
elements present at the system at a given time. A directed edge from a vertex terminates at a vertex representing 
 the new state of the system after all reactions enabled at the origin, with possible additions from the outside environment, are performed. 
 In this way, the graph of the
 $0$-context reaction system is a one-out graph (a skeleton) and is a subgraph of the graph of the full dynamics of the system. We characterize the graphs representing the global dynamics of
 reaction systems and show that two reaction systems are equivalent if their $0$-context graphs
 are companion skeletons.

%%%%%%%%%%%%%%%%%%%%%
\section{Subsets and Companions}
%%%%%%%%%%%%%%%%%%%%

We denote $[n] = \{0,1,\dots,n-1\}$.
The power set of a set $A$ is denoted by $2^A$.  The number of elements of a finite set $A$ is denoted by $|A|$ and is called the {\it size} of $A$.
Given a function $f: X\rar Y$, the natural equivalence on $X$ defined by $f$ is denoted with $\ker\!\! f$, i.e., $x\ker\!\! f y$ if and only if $f(x)=f(y)$. For $x\in X$ the equivalence class of $\ker\!\! f$ is denoted $[x]_f$.
For a finite set $V$, let $\mc O \subseteq 2^V$ be a family of subsets of $V$. We say that $\mc O$ is a family of sets with domain $V$. The elements in $V$ that appear in the same region of the Venn diagram for $\mc O$ are ``companions" with respect to $\mc O$.
Formally, let
${\mc N_{\mc O}(x)} = \{\; X\in \mc O \mid x\in X \;\}$
be the subfamily containing all sets that include $x$ and ${\mc N^c_{\mc O}(x)}$ its complement in $\mc O$, the subfamily of those sets that don't contain $x$. We call $\mc N_{\mc O}(x)$ the {\em neighborhood} of $x$.

\begin{defn}
Let $V$ be a finite set and $\mc O\subseteq 2^V$ be a family of subsets of $V$. Two elements $x,y\in V$ are \emph{companions with respect to $\mc O$} if ${\mc N_{\mc O}(x)} = {\mc N_{\mc O}(y)}$. We write $x \sim_{\mc O} y$ and denote the equivalence class of $x$ by $C_{\mc O}(x)$.
The set $C_{\mc O}(x)$ is called a {\em companion set}.
\end{defn}

Thus, the equivalence class of every element $x \in V$, the set of companions of $x$ relative to $\mc O$, is the intersection of all sets in $\mc O$ that include $x$ minus the union of the remaining sets in $\mc O$, i.e. $ C_{\mc O}(x) = \bigcap {\mc N_{\mc O}(x)} \setminus \bigcup {\mc N_{\mc O}^c(x)}$. The same
equivalence based on neighborhoods of elements with respect to a family was also used in~\cite{convex-codes} where authors study activity regions for a set of neurons and the convexity of these regions was considered. A special case is when ${\mc N_{\mc O}(x)} = \varnothing$, i.e., when $x \notin \bigcup\mc O$, in which case it is in the outer region, $V \setminus (\bigcup\mc O)$ denoted by $\left(\bigcup\mc O\right)^c$, of the Venn diagram for ${\mc O}$. That is, by convention,  $\bigcap {\mc N_{\mc O}(x)} = \bigcap\varnothing = V$ and $ C_{\mc O}(x) = \left(\bigcup\mc O\right)^c$.

The converse also holds. Any non-empty intersection of sets in $\mc P \subseteq \mc O$ minus the union of the remaining sets $\mc P^c =\mc O \setminus \mc P$ forms an equivalence class $C_{\mc P}$. More precisely,  any non-empty $C_{\mc P}=\bigcap {\mc P} \setminus \bigcup {\mc P^c}$ for some $\mc P \subseteq \mc O$ coincides with $C_{\mc O}(x)$ for some $x \in V$. Assuming $x \in C_{\mc P}$ implies $x \in X$ for every $X \in \mc P$ and $x \not \in Y$ for every $Y \in \mc P^c$. Hence, $x \in \bigcap \mc N_{\mc O}(x) \setminus \bigcup \mc N_{\mc O}^c(x)=C_{\mc O}(x)$ and $C_{\mc P} \subseteq C_{\mc O}(x)$. Conversely, if $y \in C_{\mc O}(x)$ then $y \in \bigcap \mc N_{\mc O}(x)$ which is precisely $\bigcap \mc P$ and $y \not \in \bigcup N_{\mc O}^c(x)=\bigcup \mc P^c$ for $\mc P=\mc N_{\mc O}(x)$ and hence $C_{\mc O}(x) \subseteq C_{\mc P}$. Thus, $C_{\mc P}=C_{\mc O}(x)$.

Therefore, every equivalence class $ C$ of $\sim_{\mc O}$ is characterized by a subset $\mc P \subseteq \mc O$, its neighborhood, such that  $ C = \bigcap {\mc P} \setminus \bigcup {\mc P^c}$. In general, not every $\mc P \subseteq \mc O$ defines an equivalence class, i.e., $\bigcap {\mc P} \setminus \bigcup {\mc P^c}$ might be empty. This is the case when the corresponding region of the Venn diagram of ${\mc O}$ is empty.

For a family of sets $\mc O$ we denote with  ${\mc O}^\cap$ the smallest family of sets that contains $\mc O$ and  is closed under intersection. We say that $\mc O^\cap$ is the {\it intersection} closure of $\mc O$. If $\mc O=\mc O^\cap$ we say that $\mc O$ is {\it intersection closed}.

\begin{exam} \label{ex:companions}
Consider the finite set $V = \{1,2,\dots, 8\}$ and the family of subsets $\mc O \subseteq 2^V$ given by $\mc O = \{\{1,2,3,4\}, \{4,5\},\{5\} \}$. Then $\mc O^\cap = \mc O \cup \{\{ 4 \}\}$. Note that $ C_{\mc O}(1) = \bigcap {\mc N_{\mc O}(1)} \setminus \bigcup {\mc N_{\mc O}^c(1)}=\{1,2,3,4\} \setminus (\{4,5\} \cup \{5\})=\{1,2,3\}=C_{\mc O}(2)=C_{\mc O}(3)$ and $ C_{\mc O}(4) = 
%\bigcap {\mc N_{\mc O}(4)} \setminus \bigcup {\mc N_{\mc O}^c(4)}=(\{1,2,3,4\}\cap\{4,5\})\setminus \{5\}=
  \{4\}$.  %Also, $ C_{\mc O}(5) = \{5\}$ and $ C_{\mc O}(6) = \{6,7,8\}=C_{\mc O}(7)=C_{\mc O}(8)$.
Thus, the family $\mc O$ defines the following companion sets:
$\{1,2,3\}$,
$\{4\}$,
$\{5\}$, and
$\{6,7,8\}$, %as shown in Figure~\ref{fig:venn-diagram}, 
 where each nonempty region in the corresponding Venn diagram is a companion set.
\end{exam}
%
%\begin{figure}[h]
%\begin{tikzpicture}
%\tikzstyle{punt}=[circle,draw,fill=blue!20,thick,minimum size=0.5mm,inner sep=0pt]
%   \draw [rounded corners=10] (-2.5,-1.5) -- (4,-1.5) -- (4,2) -- (-2.5,2) --  cycle;
%   \draw [blue] (0,0) ellipse (2 and 1);
%   \draw [blue] (1.5,0.5) ellipse (2 and 1);
%   \draw [blue] (2.5,0.75) ellipse (0.7 and 0.4);
%   \node at (-1.0,0.1) [punt,label=below:{$1$}] {};
%   \node at (-0.5,-0.2) [punt,label=below:{$2$}] {};
%   \node at (-0.0,-0.5) [punt,label=below:{$3$}] {};
%   \node at (0.8,0.4) [punt,label=below:{$4$}] {};
%   \node at (2.5,0.8) [punt,label=below:{$5$}] {};
%   \node at (2.5,-0.7) [punt,label=below:{$6$}] {};
%   \node at (3.0,-0.7) [punt,label=below:{$7$}] {};
%   \node at (3.5,-0.7) [punt,label=below:{$8$}] {};
%\end{tikzpicture}
%\caption{The Venn diagram for ${\mc O}$ depicting the companion sets in Example~\ref{ex:companions}} \label{fig:venn-diagram}
%\end{figure}

In sections that follow we use correspondence of families of sets, that have the same sizes of the sets as well as their intersections.

\begin{defn} \label{def:faithful}
Let $\mc O_1$ and $\mc O_2$ be two families of sets over the domains $V$ and $W$ respectively, i.e. $\mc O_1 \subseteq 2^V$ and $\mc O_2 \subseteq 2^W$. A {\em faithful correspondence between $\mc O_1$ and $\mc O_2$} is a
bijection $\eta: {\mc O_1^\cap} \rar {\mc O_2^\cap}$ that satisfies
\begin{itemize}
\item[i] $|X|=|\eta(X)|$ for all $X\in \mc O_1^\cap$,
 and
\item[ii] $\eta (X\cap Y)=\eta (X) \cap \eta(Y)\in \mc O_2^\cap$ for all $X,Y\in \mc O_1^\cap$.
\end{itemize}
\end{defn}

Note that if the domains differ in size, i.e. $|V| \ne |W|$, neither $V$ nor $W$ can be included in $\mc O_1$ (resp. $\mc O_2$).  As shown below in Lemma~\ref{lem:non-empty-companions}, faithful correspondences preserve not only the size of the sets and their intersections, but also the sizes of the companion sets defined by these families. This is only true if the sizes of the domains are equal since otherwise the outer regions differ in size:  $|(\bigcup \mc O_1)^c| \ne |(\bigcup \mc O_2)^c|$.

\begin{lem} \label{lem:non-empty-companions}
Let $\eta$ be a  faithful correspondence between $\mc O_1\subseteq 2^V$ and $\mc O_2\subseteq 2^W$, with $|V|=|W|$.
Then, there exists a bijection $\varphi$ from $V$ to $W$ whose extension to $2^V$ coincides with $\eta$.
Moreover, the extension bijectively maps companion sets of $\mc O_1$  into companion sets of $\mc O_2$, respecting size.
\end{lem}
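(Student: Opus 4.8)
The plan is to produce $\varphi$ by matching the companion sets of $\mc O_1$ with those of $\mc O_2$ in a size-preserving fashion and then gluing arbitrary bijections on each companion set. First I would reduce to the intersection-closed case: since every member of $\mc O_1^\cap$ is a finite intersection of members of $\mc O_1$, two elements of $V$ lie in the same members of $\mc O_1$ iff they lie in the same members of $\mc O_1^\cap$, so $\sim_{\mc O_1}$ and $\sim_{\mc O_1^\cap}$ (and hence their companion sets) coincide, and likewise for $\mc O_2$. Thus I may assume $\mc O_1=\mc O_1^\cap$, $\mc O_2=\mc O_2^\cap$, so $\eta\colon\mc O_1\to\mc O_2$ is a size-preserving bijection with $\eta(X\cap Y)=\eta(X)\cap\eta(Y)$. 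I would also record two bookkeeping facts, implicit in the discussion before Definition~\ref{def:faithful}: writing $C_{\mc P}:=\bigcap\mc P\setminus\bigcup(\mc O_1\setminus\mc P)$ for $\mc P\subseteq\mc O_1$, the companion sets of $\mc O_1$ are exactly the nonempty $C_{\mc P}$, each companion set equals $C_{\mc P}$ for a \emph{unique} $\mc P$ (its neighborhood), and every $X\in\mc O_1$ is the disjoint union of the companion sets $C_{\mc P}$ with $X\in\mc P$. (Uniqueness: if $Z\in\mc P\setminus\mc P'$ then $C_{\mc P}\subseteq Z$ while $C_{\mc P'}\cap Z=\varnothing$.)

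The crux is the identity $|C_{\mc P}|=|C_{\eta(\mc P)}|$, where $\eta(\mc P):=\{\eta(X):X\in\mc P\}$. For $\mc P\neq\varnothing$ I would expand by inclusion--exclusion,
\[
|C_{\mc P}|=\Bigl|\bigcap\mc P\Bigr|-\sum_{\varnothing\neq\mc S\subseteq\mc O_1\setminus\mc P}(-1)^{|\mc S|+1}\Bigl|\bigcap(\mc P\cup\mc S)\Bigr|,
\]
every term being the size of an intersection of a nonempty subfamily, hence of a member of $\mc O_1^\cap=\mc O_1$. Since $\eta$ preserves sizes, satisfies $\eta(\bigcap\mc T)=\bigcap\eta(\mc T)$ for finite nonempty $\mc T$, and $\mc P\mapsto\eta(\mc P)$ is a bijection taking $\mc O_1\setminus\mc P$ onto $\mc O_2\setminus\eta(\mc P)$, the right-hand side goes termwise to the same expression for $\eta(\mc P)$, i.e.\ to $|C_{\eta(\mc P)}|$. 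For the outer region $\mc P=\varnothing$ I would instead write $|C_\varnothing|=|V|-|\bigcup\mc O_1|$, expand $|\bigcup\mc O_1|$ by inclusion--exclusion into sizes of members of $\mc O_1^\cap$ (so $|\bigcup\mc O_1|=|\bigcup\mc O_2|$), and invoke $|V|=|W|$ to get $|C_\varnothing|=|W|-|\bigcup\mc O_2|=|C_{\eta(\varnothing)}|$. In particular $C_{\mc P}\neq\varnothing$ iff $C_{\eta(\mc P)}\neq\varnothing$, so $\Phi\colon C_{\mc P}\mapsto C_{\eta(\mc P)}$ is a well-defined size-preserving bijection from the companion sets of $\mc O_1$ onto those of $\mc O_2$.

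To finish, for each companion set $C$ of $\mc O_1$ choose an arbitrary bijection onto $\Phi(C)$ (possible since $|C|=|\Phi(C)|$), and let $\varphi\colon V\to W$ be the map that restricts to each of these; it is a bijection because the companion sets of $\mc O_1$ partition $V$ and their $\Phi$-images partition $W$. It remains to check $\varphi(X)=\eta(X)$ for $X\in\mc O_1=\mc O_1^\cap$. By the companion-set decomposition of members of $\mc O_1$, $\varphi(X)$ is the union of $\Phi(C)$ over the companion sets $C\subseteq X$; writing $C=C_{\mc P}$, the condition $C_{\mc P}\subseteq X$ is $X\in\mc P$, which under $\mc P\mapsto\eta(\mc P)$ becomes $\eta(X)\in\eta(\mc P)$, i.e.\ $\Phi(C_{\mc P})=C_{\eta(\mc P)}\subseteq\eta(X)$. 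Using that this correspondence is onto the companion sets of $\mc O_2$ contained in $\eta(X)$, $\varphi(X)$ is exactly the union of those, which is $\eta(X)$. The ``respecting size'' clause is just the size-preservation of $\Phi$ established above.

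I expect the main obstacle to be the size identity in the middle paragraph: arranging the inclusion--exclusion so that \emph{every} quantity appearing is the size of a member of $\mc O_1^\cap$ (so $\eta$ may be applied), with $|V|$ as the sole exception and only in the genuinely outer region, where $|V|=|W|$ rescues it; keeping the empty- versus nonempty-neighborhood cases straight is the one delicate point. Once the identity and the bijection $\Phi$ are in hand, the construction of $\varphi$ and the verification that it extends $\eta$ are routine gluing.
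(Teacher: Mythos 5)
Your proposal is correct and follows essentially the same route as the paper's proof: establish $|C_{\mc P}|=|C_{\eta(\mc P)}|$ by inclusion--exclusion (with the outer region $\mc P=\varnothing$ handled separately via $|V|=|W|$), then glue arbitrary size-preserving bijections between corresponding companion sets. You are somewhat more careful at the edges --- the reduction to the intersection-closed case, the uniqueness of the neighborhood $\mc P$, and the explicit check that $\varphi$ extends to $\eta$ --- but these elaborate rather than alter the paper's argument.
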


\begin{proof}
Observe that the equivalence classes of $\sim _{\mc O_1}$, which are companion sets, are precisely the {\it non-empty} sets  $C_{\mc P} = \bigcap {\mc P} \setminus \bigcup {\mc P}^c$ for some family $\mc P \subseteq \mc O_1$, where $\mc P^c=\mc O_1 \setminus \mc P$. Similarly, this holds for $\mc O_2$. More precisely, set $\mc C_i=\{C_{\mc P} \mid \text{ for some } \mc P \subseteq \mc O_i, C_{\mc P} \ne \varnothing\}$ for $i=1,2$. We show that the corresponding equivalence classes (companion sets) under $\eta$ are of the same size. Once this is established, the required bijection that respects the equivalences can naturally be constructed.

We consider the images under $\eta$ of the sets of $\mc O_1$ in $\mc O_2$, and let
$\eta(C_{\mc P}) = C_{\eta(\mc P)} =  \bigcap \eta(\mc P) \setminus \bigcup \eta(\mc P)^c$.
where $\eta(\mc P) = \{\; \eta(X) \mid X\in {\mc P} \;\} \subseteq \mc O_2$ and
$\eta(\mc P^c) = \{\; \eta(X) \mid X\in \mc P^c \;\} = \eta(\mc P)^c$, the latter equality holds because $\eta$ is a bijection.
When $\mc P$ ranges over the subsets of $\mc O_1$, then $C_{\mc P}$ ranges over the equivalence classes of $\mc O_1$ while $\eta(C_{\mc P})$ ranges over the equivalence classes of $\mc O_2$.

We argue that $|C_{\mc P}| = |\eta(C_{\mc P})|$ -- this then takes care of the cases where $\mc P$ defines the empty set instead of an equivalence class, as the corresponding image under $\eta$ is  %$\mc \eta(P)$ will be 
void too.
First observe that  $|\bigcap {\mc P} \setminus \bigcup {\mc P^c}| =  |\bigcap {\mc P}| - |\bigcap {\mc P} \cap \bigcup {\mc P^c}|$.

The equality $|\bigcap {\mc P}|=|\bigcap \eta(\mc P)|$ follows from the first and the second requirement in Definition~\ref{def:faithful}, except for the special case when ${\mc P} = \varnothing$, but then $\bigcap {\mc P} = V$ and
consequently, $\eta(\mc P)=\varnothing$ and $\bigcap \eta(\mc P)=V$.
The inclusion-exclusion principle states that we can express the size of a union of sets as sums of sizes of intersections,
$
| \bigcup_{i=1}^m Y_i| =
\sum_{\varnothing \subset I\subseteq [m]} (-1)^{|I|+1}
             | \bigcap_{i\in I} Y_i |
$.
We can apply this to the sets $\bigcap {\mc P} \cap Z$, $Z\in \mc P^c$, to obtain
$|\bigcap {\mc P} \cap \bigcup {\mc P^c}| =
|\bigcap \eta(\mc P) \cap \bigcup \eta(\mc P)^c|$.
Now that corresponding companion sets in $\mc O_1$ and $\mc O_2$ have the same number of elements, we can construct a bijection from $V$ to $W$ that respects companions.
\end{proof}

%%%%%%%%%%%%%%%%%%%%
\section{Directed Graphs and Companion Skeletons}
%%%%%%%%%%%%%%%%%%%%%

A {\it directed graph} $G$ is a pair of sets $(V,E)$ where $V$ is a finite set whose
elements are called  {\it vertices} and
$E\subseteq V\t V$ is the set of {\it edges}. We also write $V(G)$ (resp. $E(G)$) to denote the set of vertices
(resp. edges) of $G$.  For an edge $e=(v,v')$ we say that $v$ is the {\it initial} vertex of $e$
and $v'$ is the {\it terminal} vertex of $e$. For a vertex $v\in V$ we define $\inn_G(v)$ to be the set of all vertices that are initial for edges whose terminal vertex is $v$, i.e., $\inn_G (v)=\{\, w \mid (w,v)\in  E\,\}$. Similarly, the {\it out-set} $\out_G(v)$ is the set of
all vertices that are terminal to all edges whose initial vertex is $v$, that is,
$\out_G(v) = \{\, w \mid (v,w)\in E\,\}$.
If $|\out_G(v)|=1$ for all vertices $v$, then we say that $G$ is a \emph{$1$-out} graph.
The {\it out-family} is defined:
$$
\mc{O}(G)=\{\, \out_G(v)\,|\, v \in V\, \}
$$
We drop the subscript $G$ in $\inn_G(v)$ and $\out_G(v)$ whenever the graph is understood from the context.

The following observation is the main motivation for considering the notion of companions.

\begin{lem} \label{in-comp}
Let  $G$ be a directed graph  and let $\mc O=\mc O(G)$. Then $\inn_{G}(x)=\inn_{G}(y)$
if and only if $x\sim_{\mc O} y$.
\end{lem}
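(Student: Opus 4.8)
The plan is to reduce everything to the elementary duality between in-sets and out-sets: for any vertices $w$ and $x$ we have $w\in \inn_G(x)$ iff $(w,x)\in E$ iff $x\in \out_G(w)$. In other words, the set $\inn_G(x)$ records exactly, for each vertex $w$, whether or not $x$ lies in $\out_G(w)$. Consequently $\inn_G(x)=\inn_G(y)$ holds precisely when, for every $w\in V$, $x\in \out_G(w)$ if and only if $y\in \out_G(w)$. On the other side, $x\sim_{\mc O} y$ means $\mc N_{\mc O}(x)=\mc N_{\mc O}(y)$, and since $\mc O=\mc O(G)=\{\out_G(w)\mid w\in V\}$, a set $O$ lies in $\mc N_{\mc O}(x)$ iff $x\in O$ and $O=\out_G(w)$ for some $w$. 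So both sides of the claimed equivalence are really statements about which out-sets contain $x$ versus $y$, and the proof is a short membership chase in each direction.

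For the forward implication I would assume $\inn_G(x)=\inn_G(y)$ and take any $O\in\mc N_{\mc O}(x)$. Choose $w$ with $O=\out_G(w)$; then $x\in O$ gives $w\in\inn_G(x)=\inn_G(y)$, hence $y\in\out_G(w)=O$, so $O\in\mc N_{\mc O}(y)$. By symmetry $\mc N_{\mc O}(x)=\mc N_{\mc O}(y)$, i.e. $x\sim_{\mc O} y$. Conversely, assuming $x\sim_{\mc O} y$, take $w\in\inn_G(x)$, so $x\in\out_G(w)$; since $\out_G(w)\in\mc O$, we get $\out_G(w)\in\mc N_{\mc O}(x)=\mc N_{\mc O}(y)$, whence $y\in\out_G(w)$, i.e. $w\in\inn_G(y)$. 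Again the reverse inclusion is symmetric, giving $\inn_G(x)=\inn_G(y)$.

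There is essentially no hard step here; the only point deserving care — and the closest thing to an obstacle — is that $\mc O(G)$ is a \emph{set} of out-sets rather than a family indexed by vertices, so distinct vertices may share an out-set. This causes no difficulty, because membership of $O$ in $\mc N_{\mc O}(x)$ only requires \emph{some} witnessing vertex $w$ with $\out_G(w)=O$, and a single such witness is exactly what is used in both directions of the argument above.
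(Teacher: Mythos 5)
Your proof is correct and takes essentially the same approach as the paper: both hinge on the duality $(w,x)\in E \iff x\in\out_G(w)$, so that equality of in-sets is exactly the statement that $x$ and $y$ lie in the same out-sets, i.e.\ that $\mc N_{\mc O}(x)=\mc N_{\mc O}(y)$. Your version merely spells out the two-directional membership chase (and the harmless point about repeated out-sets) that the paper's one-line argument leaves implicit.
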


\begin{proof}
Observe that  $(w,x)$ is an  edge  in $G$ if and only if  $x\in \out_G(w)$.
Thus two nodes $x$ and $y$ have the same incoming edges if and only if 
they are in the same out-sets in $\mc O$, thus if and only if
they are companions with respect to  $\mc O$, i.e., $x\sim_{\mc O} y$.
\end{proof}

In the case when $\mc O$ is $\mc O(G)$ for a graph $G$, the number of elements in $\mc O$ cannot exceed the number of vertices $V=V(G)$. Therefore, in this case we can always assume that $\mc O$ is indexed by a subset $R_{\mc O}\subseteq V$, i.e., $\mc O = \{ O_z \mid z\in R_{\mc O} \}$. Given this representation, for $z\in R_{\mc O}$ the element $z$ is called the \emph{representative} of $O_z \in \mc O$ and $R_{\mc O}$ is the
\emph{set of representatives} for $\mc O$.
We assume that representatives are unique, i.e., $O_z=O_{z'}$ if and only if $z=z'$.

Formally, representatives are fixed as a bijection $\rho: R_{\mc O}\to \mc O$.
Clearly, the choice of a representative for a set $O$ in $\mc O$ depends on the function $\rho$, but changing the map $\rho$ is the same as renaming the sets in $\mc O$.

\medskip\noindent
{\bf Convention.} In order to ease our notation, if $\mc O$ is understood, we drop the subscript in $R_{\mc O}$ and simply denote the set of representatives for $\mc O$ by $R$. Also, in this context, we always assume that $R$ is a subset of $V$ and
$R$ is an index set for the family $\mc O$ such that
 each $z \in R$ uniquely determines a set $O_z$ in $\mc O$.

\smallskip
Let $G=(V,E)$ be a directed graph where $\out_G(x)\neq \varnothing$ for every vertex $x\in V$, i.e., $G$ is without isolated vertices.
  Let $\mc O =\mc O(G) = \{ O_z \mid z\in R \}$ for a set of representatives $R\seq V$.
Define  $f:V\rar R$  such that $f(x)=z$ if and only if $\out_G(x)=O_z$. Consider a one-out graph  $G_f=(V,E_f)$ where $E_f=\{\;(x,f(x)) \mid x\in V\;\}$.
Then by the choice of $f(x)$ as the representative of $\out_G(x)$ we have that $(x,z),(y,z)\in E_f$ if and only if $\out_G(x)=\out _G(y)$. This induces the following definition.

In the rest of this section we assume that $V$ is fixed and finite.

\begin{defn}
A triple $\sigma=(\mc O,R,f)$ is called a {\em skeleton over set  $V$} if
 $\mc O = \{O_z\mid z\in R\}$ is a family of subsets of $V$ indexed by $R$ with $|\mc O|\le |V|$, and
$f:V\rar R$ is a surjection.  %function such that $f(V)=R$.

A {\em graph defined by the skeleton $\sigma$} is the graph $G_\sigma=(V,E)$ where $E=\{\,(x,w) \mid x\in V, w\in O_{f(x)}\}$.
\end{defn}

The graph $G_\sigma$ is uniquely determined by the skeleton $\sigma$. Directly from the definition we have that
$\mc O(G_\sigma)=\mc O$ and $\out_{G_\sigma}(x)=O_{f(x)}$. Moreover, for every directed graph $G$ there is a skeleton $\sigma$ such that
$G=G_\sigma$. It is sufficient to take $\sigma=(\mc O(G), R, f)$ where $R$ is a set of representatives of $\mc O(G)$ and $f(x)=r\in R$ if
and only if $\out_G(x)= O_r$.

\begin{rem}
Note that when every representative $z \in R$ is such that $z \in O_z$, i.e., the representative of every set in $\mc O$ inside that set, then the
one-out graph $G_f=(V,E_f)$ where $E_f=\{\, (x,f(x))\mid x\in V\}$ defined by a skeleton $\sigma=(V,\mc O,f)$ is isomorphic to a subgraph of $G_\sigma$.
Being an element of the set seems as  a natural requirement for the representatives, but unfortunately this is not always possible.
For instance a graph of four vertices cannot have four out-sets from a three element domain, like $\{1,2\}, \{2,3\}, \{1,3\}$ and $\{1\}$.  The out-sets cannot 
contain their own representatives,  and a skeleton for $G$ cannot be a subgraph of $G$.
\end{rem}

\begin{figure}[h]
$G_f$\hskip -1cm
\begin{tikzpicture}
\tikzstyle{punt}=[circle,draw,fill,thick,minimum size=0.7mm,inner sep=0pt]
   \node (1) at (0,-1.4) [punt,label=below:{$1$}] {};
   \node (3) [blue] at (-1.33,-0.43) [punt,label=left:{\blue{$3$}}] {};
   \node (2) [red] at (-0.82,1.13) [punt,label=above:{\red{$2$}}] {};
   \node (4) at (0.82,1.13) [punt,label=above:{$4$}] {};
   \node (5) at (1.33,-0.43) [punt,label=right:{$5$}] {};
\tikzstyle{every edge}=[draw,->,>=stealth,shorten >=5pt,shorten <=5pt]
  \path
       (1) edge (2)
       (3) edge [bend left=10]  (2)
       (2) edge[bend left=10] (3)
       (4) edge  (3)
       (5) edge (3);
       \end{tikzpicture}
          \hskip 1.5cm $G_g$
       \begin{tikzpicture}
\tikzstyle{punt}=[circle,draw,fill,thick,minimum size=0.7mm,inner sep=0pt]
   \node (1) [red] at (0,-1.4) [punt,label=below:{\red{$1$}}] {};
   \node (3) at (-1.33,-0.43) [punt,label=left:{$3$}] {};
   \node (2) at (-0.82,1.13) [punt,label=above:{$2$}] {};
   \node (4) [blue] at (0.82,1.13) [punt,label=above:{\blue{$4$}}] {};
   \node (5) at (1.33,-0.43) [punt,label=right:{$5$}] {};
       \tikzstyle{every edge}=[draw,->,>=stealth,shorten >=5pt,shorten <=5pt]
  \path
       (1) edge [loop,in=-5,out=-45, min distance=1.5cm] (1)
       (3) edge (1)
       (2) edge (4)
       (4) edge [loop,in=-20,out=15, min distance=1.5cm] (4)
       (5) edge (4);
       \end{tikzpicture}
\hskip 1.5cm $G_h$
       \begin{tikzpicture}
\tikzstyle{punt}=[circle,draw,fill,thick,minimum size=0.7mm,inner sep=0pt]
   \node (1) at (0,-1.4) [punt,label=below:{$1$}] {};
   \node (3) at (-1.33,-0.43) [punt,label=left:{$3$}] {};
   \node (2) at (-0.82,1.13) [punt,label=above:{$2$}] {};
   \node (4) [red] at (0.82,1.13) [punt,label=above:{\red{$4$}}] {};
   \node (5) [blue] at (1.33,-0.43) [punt,label=right:{\blue{$5$}}] {};
       \tikzstyle{every edge}=[draw,->,>=stealth,shorten >=5pt,shorten <=5pt]
  \path
       (1) edge (4)
       (3) edge (4) %[loop,in=-125,out=-90, min distance=1.5cm] (2)
       (4) edge (5)
       (5) edge [loop,in=-90,out=-50, min distance=1.5cm] (5)
       (2) edge (5)
;
\end{tikzpicture}

 $G$ \hskip -2cm
\begin{tikzpicture}
\tikzstyle{punt}=[circle,draw,fill,thick,minimum size=0.7mm,inner sep=0pt]
   \node (1) at (0,-1.4) [punt,label=below:{$1$}] {};
   \node (3) at (-1.33,-0.43) [punt,label=left:{$3$}] {};
   \node (2) at (-0.82,1.13) [punt,label=above:{$2$}] {};
   \node (4) at (0.82,1.13) [punt,label=above:{$4$}] {};
   \node (5) at (1.33,-0.43) [punt,label=right:{$5$}] {};
\tikzstyle{every edge}=[draw,->,>=stealth,shorten >=3pt,shorten <=3pt]
  \path
       (1) edge [loop,in=-5,out=-45, min distance=1.5cm] (1)
       (1) edge (2)
       (1) edge [bend left=10] (3)
       (2) edge [loop,in=170,out=135, min distance=1.5cm] (2)
       (2) edge [bend left=10] (3)
       (2) edge [bend left=10] (4)
       (3) edge [bend left=10] (1)
       (3) edge [bend left=10] (2)
       (3) edge [loop,in=-120,out=-80, min distance=1.5cm] (3)
       (4) edge [bend left=10] (2)
       (4) edge [bend left=10] (3)
       (4) edge [loop,in=-20,out=15, min distance=1.5cm] (4)
       (5) edge (2)
       (5) edge (3)
       (5) edge (4)
;
\end{tikzpicture}
\caption{From Example ~\ref{ex:graph-from-skeleton}:
Top: the one-out graphs $G_f$, $G_g$ and $G_h$ corresponding to the skeletons $\nu$, $\sigma$, and $\tau$. In each graph, the representative of $X$ is indicated in \textcolor{red}{red} and the representative of $Y$ is in \textcolor{blue}{blue}.
Bottom: the graph $G=G_\nu=G_\sigma=G_\tau$. Graphs $G_f$ and $G_g$ are subgraphs of $G$, but $G_h$ is not.}
\label{fig:graph-from-skeleton}
\end{figure}
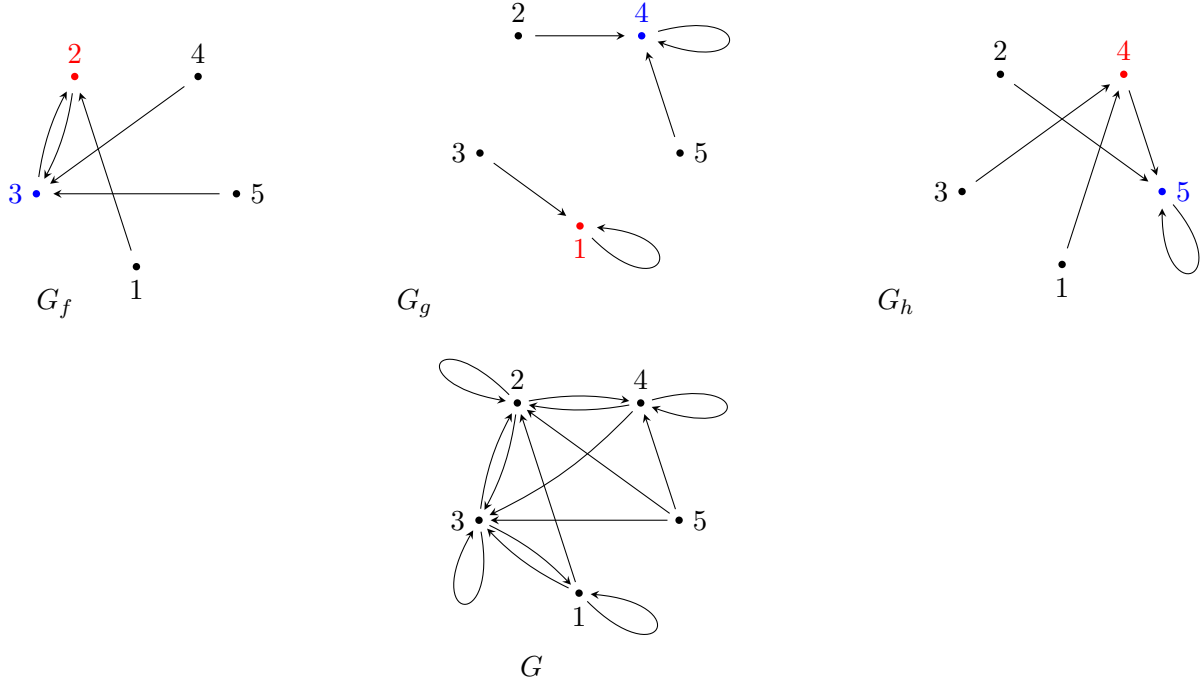

\begin{exam} \label{ex:graph-from-skeleton}
Consider the collection  %$\mc O = 
$\{ X=\{1,2,3\}, Y=\{2,3,4\} \}$ over $V=\{1,2,\ldots,5\}$. Suppose $G$ is a graph where $\mc O(G)$ is such that $\out(1)=\out(3)=X$
and $\out(2)=\out(4)=\out(5)=Y$.
We consider three skeletons $\nu=(\mc O, R_\nu, f)$, $\sigma=(\mc O',R_\sigma, g)$ and
$\tau=(\mc O'', R_\tau, h)$ where $\mc O=\mc O'=\mc O''$ and the representatives are defined as follows.
$$
\begin{array}{lllll}
R_\nu=\{2,3\} &\text{ with }&X=O_2 &\text{ and }&Y=O_3,\\
R_\sigma=\{1,4\} &\text{ with }&X=O'_1 &\text{ and }&Y=O'_4, \\
R_\tau=\{4,5\} &\text{ with }&X=O''_4 &\text{ and }&Y=O''_5. 
\end{array}
$$
The three one-out graphs $G_f$, $G_g$ and $G_h$ are depicted in Fig.~\ref{fig:graph-from-skeleton}(top).
All three skeletons define the same graph $G$ whose edges are $\{1,3\}\t\{1,2,3\}\cup \{2,4,5\}\t\{2,3,4\}$,
i.e., $G=G_\nu= G_\sigma= G_\tau$. Because both $R_\nu$ and $R_\sigma$ have the property that
the representative of $X$ is in $X$ and the representative of $Y$ is in $Y$, the one-out graphs $G_f$ and $G_g$ are subgraphs of $G$ (see 
Fig.~\ref{fig:graph-from-skeleton}). However, $R_\tau$ doesn't have that property, and $G_h$ is not a subgraph of $G$  
(the vertex $5$ is not in any out-set of $G$).
\qed
\end{exam}

Given two skeletons $\sigma=(\mc O,R,f)$, and $\tau=(\mc P,Q,g)$, we are interested under which conditions their graphs $G_\sigma$ and $G_\tau$ are isomorphic.
We observe that the structures of $G_f$ and $G_g$ may be quite different (as seen in Example~\ref{ex:graph-from-skeleton}) and yet, $G_\sigma$ and $G_\tau$ may be isomorphic.
We utilize the following definition.

\begin{defn} \label{def:graph-companions}
Two skeletons $\sigma=(\mc O,R,f)$ over set $V$, and $\tau=(\mc P,Q,g)$ over set $W$
 are called {\em companions} if there is a bijection $\eta:V\rar W$ that extends to
  a faithful correspondence $\eta:\mc O^{\cap} \rar \mc P^{\cap}$ such that
 $\eta(O_{f(x)})=P_{g(\eta(x))}$.
\end{defn}

Note that because $\eta$ is a bijection and a faithful correspondence, the relationship ``skeleton companions'' is an equivalence relation on skeletons. 
First we see that any pair of skeletons for the same graph are companions.

\begin{lem} \label{lem:same-graph-comp}
Any two skeletons $\sigma=(\mc O,R,f)$ and $\tau=(\mc P,Q,g)$ over $V$ such that $G_\sigma=G_\tau$, are companions.
\end{lem}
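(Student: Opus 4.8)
The plan is to exploit that equality of the graphs $G_\sigma$ and $G_\tau$ — not merely isomorphism — forces the two out-families to be literally the same family of subsets of $V$, so that the identity map on $V$ already serves as the required bijection.

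First I would recall the two facts noted right after the definition of a skeleton: for any skeleton $\sigma=(\mc O,R,f)$ the graph it defines satisfies $\out_{G_\sigma}(x)=O_{f(x)}$ for every $x\in V$, and $\mc O(G_\sigma)=\mc O$. Applying this to $\sigma$ and to $\tau$, and writing $G=G_\sigma=G_\tau$, I obtain $\mc O=\mc O(G)=\mc P$ as families of subsets of $V$; in particular $\mc O^{\cap}=\mc P^{\cap}$. Moreover, for every $x\in V$ we get $O_{f(x)}=\out_G(x)=P_{g(x)}$.

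Next I would take $\eta=\mathrm{id}_V$, with extension to $2^V$ equal to the identity map. Since $\mc O^{\cap}=\mc P^{\cap}$, this identity restricts to a bijection $\mc O^{\cap}\rar\mc P^{\cap}$, and it trivially meets both clauses of Definition~\ref{def:faithful}: $|X|=|\eta(X)|$ and $\eta(X\cap Y)=\eta(X)\cap\eta(Y)\in\mc P^{\cap}$. Finally, for every $x\in V$, $\eta(O_{f(x)})=O_{f(x)}=P_{g(x)}=P_{g(\eta(x))}$, which is exactly the compatibility condition of Definition~\ref{def:graph-companions}. Hence $\sigma$ and $\tau$ are companions.

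There is essentially no obstacle here; the statement is a direct unwinding of the definitions. The one point worth making explicit — and the only thing that makes the argument go through — is that $G_\sigma=G_\tau$ pins down $\mc O$ and $\mc P$ to be the same collection of subsets of $V$, even though the index sets $R,Q$ and the assignments $f,g$ may look quite different, as illustrated by Example~\ref{ex:graph-from-skeleton}. This lemma is really the base case for the harder converse direction, where genuinely distinct graphs that happen to be isomorphic must be compared.
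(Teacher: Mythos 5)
Your proof is correct and follows essentially the same route as the paper's: both observe that $G_\sigma=G_\tau$ forces $\mc O=\mc O(G)=\mc P$ and $O_{f(x)}=\out_G(x)=P_{g(x)}$, then take the identity on $V$ as the bijection and verify the faithful-correspondence and compatibility conditions. If anything, your version is slightly more careful than the paper's in explicitly noting that the identity acts on $\mc O^{\cap}=\mc P^{\cap}$ rather than just on $\mc O$.
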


\begin{proof}
By definition of $G_\sigma$ we have that $\out_{G_\sigma}(x)=O_{f(x)}$, and $\mc O(G)=
\mc O=\mc P$.
Hence, $\out_G(x)=O_{f(x)}=P_{g(x)}$. Thus the identity map on $V$ extends to
  $\eta:\mc O \rar \mc P$ mapping
$O_{f(x)}$ to $P_{g(x)}$ for all $x\in V$ and it is a faithful correspondence satisfying $\eta(O_{f(x)})=
O_{f(x)}=P_{g(x)}=P_{g(\eta(x))}$.
\end{proof}
Let $\sigma=(\mc O,R,f)$ and $\tau=(\mc P,Q,g)$ be two companion skeletons over $V$, and
$\eta: \mc O^\cap \rar \mc P^\cap$ the corresponding faithful correspondence as in Definition~\ref{def:graph-companions}. Let $C$ be a set of
companions corresponding to $\mc O$, i.e., $x,y\in C$ if and only if $x\sim_{\mc O}y$. Then by Lemma~\ref{lem:non-empty-companions}, $\eta(C)$ is a set of companions corresponding to $\mc P$.
We have the following lemma.

\begin{lem} \label{lem:condition-iii}
For every $x\in V$, $|[x]_f\cap C|=|[\eta(x)]_g\cap \eta(C)|$
\end{lem}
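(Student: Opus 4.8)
The plan is to show that the vertex bijection $\eta\colon V\to V$ underlying the faithful correspondence already restricts to a bijection between $[x]_f\cap C$ and $[\eta(x)]_g\cap\eta(C)$; the equality of sizes then follows immediately from injectivity of $\eta$. As a first step I would rewrite the fibres of $f$ in terms of the indexed families: since representatives of $\mc O$ are unique, $f(y)=f(x)$ holds exactly when $O_{f(y)}=O_{f(x)}$, so $[x]_f=\{\,y\in V\mid O_{f(y)}=O_{f(x)}\,\}$, and likewise, using uniqueness of representatives of $\mc P$, $[\eta(x)]_g=\{\,z\in V\mid P_{g(z)}=P_{g(\eta(x))}\,\}$.

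The crux is to prove the identity $\eta([x]_f)=[\eta(x)]_g$ for every $x\in V$. Given $y\in V$, the defining property of companion skeletons in Definition~\ref{def:graph-companions} gives $P_{g(\eta(y))}=\eta(O_{f(y)})$ and $P_{g(\eta(x))}=\eta(O_{f(x)})$. Since $\eta\colon\mc O^\cap\to\mc P^\cap$ is a bijection, $\eta(O_{f(y)})=\eta(O_{f(x)})$ is equivalent to $O_{f(y)}=O_{f(x)}$, hence to $y\in[x]_f$ by the reformulation above; and by the corresponding reformulation on the $\mc P$ side it is also equivalent to $\eta(y)\in[\eta(x)]_g$. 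Thus $\eta(y)\in[\eta(x)]_g\iff y\in[x]_f$, which, $\eta$ being a bijection of $V$, is exactly the asserted identity $\eta([x]_f)=[\eta(x)]_g$.

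To finish, I would use that $\eta$, being injective, commutes with intersections, so $\eta([x]_f\cap C)=\eta([x]_f)\cap\eta(C)=[\eta(x)]_g\cap\eta(C)$ — here $\eta(C)$ is indeed a companion set of $\mc P$ by Lemma~\ref{lem:non-empty-companions}, though for the count only its being the $\eta$-image of $C$ matters — and injectivity again yields $|[x]_f\cap C|=|\eta([x]_f\cap C)|=|[\eta(x)]_g\cap\eta(C)|$, the claim. I do not expect a real obstacle: the whole content is unwinding the definition of companion skeletons while keeping straight the two roles played by $\eta$ (a bijection of vertex sets and a bijection of the intersection closures, linked by $\eta(O_{f(x)})=P_{g(\eta(x))}$). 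The only point that needs a little care is to invoke uniqueness of representatives in both directions, so that $\eta(y)\in[\eta(x)]_g$ is genuinely equivalent to $y\in[x]_f$ rather than merely implied by it.
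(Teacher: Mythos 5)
Your proposal is correct and follows essentially the same route as the paper's proof: both hinge on the identity $\eta(O_{f(x)})=P_{g(\eta(x))}$ together with uniqueness of representatives to transfer the fibres of $f$ to fibres of $g$, and then use bijectivity of $\eta$ to conclude equality of sizes. The only cosmetic difference is that you establish the biconditional $\eta(y)\in[\eta(x)]_g\iff y\in[x]_f$ directly and intersect with $C$ afterwards, whereas the paper proves one inclusion of $\eta([x]_f\cap C)\subseteq[\eta(x)]_g\cap\eta(C)$ and appeals to symmetry via $\eta^{-1}$ for the reverse.
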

\begin{proof}
We show that  $\eta(X)= Y$ for $X=[x]_f\cap C$ and $Y=[\eta(x)]_g\cap \eta(C)$.
If
 $c\in X$ then $f(c)=f(x)$ and $c\in C$, so $O_{f(c)}= O_{f(x)}$ and therefore
$\eta(O_{f(c)})=O_{g(\eta(c))} =
\eta(O_{f(x)})=O_{g(\eta(x))}$ and so $g(\eta(c)) =g(\eta(x))$ implying $\eta(c)\in [\eta(x)]_g$.
So, $\eta(c)\in Y$. Due to the symmetry of the argument
 (working with $\eta^{-1}$ instead of $\eta$), $\eta(X)=Y$, and because $\eta$
 is a bijection on $V$, $|X|=|Y|$.
\end{proof}

The correspondence  $\eta$ between two companion skeletons in  Definition~\ref{def:graph-companions} can take care of internal symmetry within set $V$. A simple case appears when $\eta$ 
swaps two companion vertices which can be reflected as a swap of the outgoing edges in the corresponding skeletons. Let $\sigma=(\mc O,R,f)$ be a skeleton. Consider two companions $x$ and $y$ with respect to $\mc O$ in $V$ along with the edges $(x,f(x))$ and $(y,f(y))$ in $G_f$. Consider
 the function $f_{x,y}:V\rar V$
  such that $f_{x,y}(x)=f(y)$, $f_{x,y}(y)=f(x)$ and $f_{x,y}(z)=f(z)$ for all $z\not = x,y$,  that is,
$f_{x,y}$ is equal to $f$, except that in $f_{x,y}$ the images of $x$ and $y$ are swapped. Then $\sigma_{x,y}=(\mc O,R,f_{x,y})$, is a skeleton
companion to $\sigma$ through the bijection $\eta:V\rar V$ where $\eta(x)=y$, $\eta(y)=x$ and
$\eta(z)=z$ for $z\not=x,y$.
Note that since $x$ and $y$ are companions,  by Lemma~\ref{in-comp}, these nodes have the same incoming edges in $G_\sigma$, and hence also in $G_{\sigma_{x,y}}$. 
Swapping the out-edges of $x$ and $y$ means that the outgoing edges
 of $x$ and $y$ are interchanged in $G_{\sigma_{x,y}}$ comparing to $G_\sigma$,
  and hence the two graphs are isomorphic (via the isomorphism that swaps vertices $x$ and $y$).
We call $G_{\sigma_{x,y}}$ a result of  $G_\sigma$ through \emph{companion edge swapping}.

\begin{exam} Consider the skeleton $\nu = (\mc O, R_\nu,f)$ from Example~\ref{ex:graph-from-skeleton}; its one-out graph is $G_f$ depicted in Fig.~\ref{fig:graph-from-skeleton}(top-left), and repeated 
in Fig.~\ref{edge-swap}.
The only pair of companions defined by the out-sets $O_2=\{1,2,3\}$ and $O_3=\{2,3,4\}$ is $2,3$. That means the only possibility for companion edge swapping is to exchange the outgoing edges of $2,3$ in $f$. As $2,3$ happen to be mapped to $3,2$ respectively, edge swapping yields the
 one-out graph $G_{f_{2,3}}$ shown in Fig.~\ref{edge-swap}.
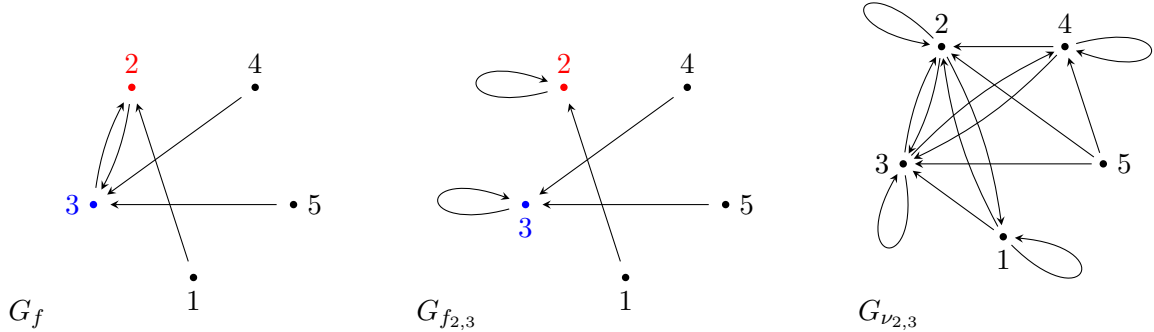
\begin{figure}[h]
\centerline{$G_f$
\begin{tikzpicture}
\tikzstyle{punt}=[circle,draw,fill,thick,minimum size=0.7mm,inner sep=0pt]
   \node (1) at (0,-1.4) [punt,label=below:{$1$}] {};
   \node (3) [blue] at (-1.33,-0.43) [punt,label=left:{\blue{$3$}}] {};
   \node (2) [red] at (-0.82,1.13) [punt,label=above:{\red{$2$}}] {};
   \node (4) at (0.82,1.13) [punt,label=above:{$4$}] {};
   \node (5) at (1.33,-0.43) [punt,label=right:{$5$}] {};
\tikzstyle{every edge}=[draw,->,>=stealth,shorten >=5pt,shorten <=5pt]
  \path
       (1) edge (2)
       (3) edge [bend left=10]  (2)
       (2) edge[bend left=10] (3)
       (4) edge  (3)
       (5) edge (3);
       \end{tikzpicture}
\hskip 1cm
$G_{f_{2,3}}$
\hskip -1cm{\begin{tikzpicture}
\tikzstyle{punt}=[circle,draw,fill,thick,minimum size=0.7mm,inner sep=0pt]
   \node (1) at (0,-1.4) [punt,label=below:{$1$}] {};
   \node (3) [blue] at (-1.33,-0.43) [punt,label=below:{\blue{$3$}}] {};
   \node (2) [red] at (-0.82,1.13) [punt,label=above:{\red{$2$}}] {};
   \node (4) at (0.82,1.13) [punt,label=above:{$4$}] {};
   \node (5) at (1.33,-0.43) [punt,label=right:{$5$}] {};
\tikzstyle{every edge}=[draw,->,>=stealth,shorten >=5pt,shorten <=5pt]
  \path
       (1) edge (2)
       (3) edge [loop,in=160,out=195, min distance=1.5cm] (3)
       (2) edge [loop,in=160,out=195, min distance=1.5cm] (2)
       (4) edge  (3)
       (5) edge (3);
       \end{tikzpicture}
       }
  \hskip 1cm      $G_{\nu_{2,3}}$ \hskip -1.5cm{
\begin{tikzpicture}
\tikzstyle{punt}=[circle,draw,fill,thick,minimum size=0.7mm,inner sep=0pt]
   \node (1) at (0,-1.4) [punt,label=below:{$1$}] {};
   \node (3) at (-1.33,-0.43) [punt,label=left:{$3$}] {};
   \node (2) at (-0.82,1.13) [punt,label=above:{$2$}] {};
   \node (4) at (0.82,1.13) [punt,label=above:{$4$}] {};
   \node (5) at (1.33,-0.43) [punt,label=right:{$5$}] {};
\tikzstyle{every edge}=[draw,->,>=stealth,shorten >=3pt,shorten <=3pt]
  \path
       (1) edge [loop,in=-5,out=-45, min distance=1.5cm] (1)
       (1) edge [bend left =10] (2)
       (1) edge  (3)
       (2) edge [loop,in=170,out=135, min distance=1.5cm] (2)
       (2) edge [bend left=10] (3)
       (2) edge [bend left=10] (1)
       (3) edge [bend left=10] (4)
       (3) edge [bend left=10] (2)
       (3) edge [loop,in=-120,out=-80, min distance=1.5cm] (3)
       (4) edge (2)
       (4) edge [bend left=10] (3)
       (4) edge [loop,in=-20,out=15, min distance=1.5cm] (4)
       (5) edge (2)
       (5) edge (3)
       (5) edge (4)
;
\end{tikzpicture}   }
       }
       \caption{Companion skeletons related by edge swapping. The graph $G_{\nu_{2,3}}$ is isomorphic to $G=G_\nu$ in Fig.~\ref{fig:graph-from-skeleton}.}
       \label{edge-swap}
  \end{figure}     
The skeleton $\nu_{2,3} = (\mc O, R_\nu,f_{2,3})$ differs in the one-out graph, but has 
the same set of representatives for the out-sets. The graphs $G_\nu$ and $G_{\nu_{2,3}}$ 
(see Fig.~\ref{fig:graph-from-skeleton}(bottom) and Fig.~\ref{edge-swap}(right)) are isomorphic.
\qed
\end{exam}

 Fig.~\ref{fig:edge-switching} illustrates   a  case where $\mc O=\mc P$ and $R=Q$ but $f$ and $g$  are distinct.
Then $\eta$ is the identity on $\mc O=\mc P$, but it permutes the elements within a given companion set with respect the family $\mc O$.
By Lemma~\ref{lem:condition-iii}, for every companion set $C$, the number of elements in $C$
that map with $f$ to element $y$ is the same with the number of elements in $C$ that map to
 $y$ with $g$. Fig.~\ref{fig:edge-switching} shows
  an example of $f$ and $g$ on a companion set $C$.
 Because
$O_{y_1}=P_{y_1}$, and $O_{f(x_1)} = \eta(O_{f(x_1)})=P_{g(\eta(x_1))}$, it must be that
$\eta(x_1)=x_2$ or $\eta(x_1)=x_3$. Say it is $x_2$.
Then $O_{y_1}=\eta(O_{y_1})=\eta(O_{f(x_1)})=
O_{g(\eta(x_1)}=O_{g(x_2)}=O_{y_1}$. In this case $G_\sigma$ and $G_\tau$ are isomorphic
and $G_g$ is obtained from $G_f$ by multiple edge swapping.
\medskip

\begin{figure}
\begin{tikzpicture}
\tikzstyle{punt}=[circle,draw,fill,thick,minimum size=0.7mm,inner sep=0pt]
   \draw [rounded corners=10] (0,0.5) -- (1.5,0.5) -- (1.5,4) -- (0,4) --  cycle;
   \node (x6) at (1,1) [punt,label=left:{$x_6$}] {};
   \node (x5) at (1,1.5) [punt,label=left:{$x_5$}] {};
   \node (x4) at (1,2) [punt,label=left:{$x_4$}] {};
   \node (x3) at (1,2.5) [punt,label=left:{$x_3$}] {};
   \node (x2) at (1,3) [punt,label=left:{$x_2$}] {};
   \node (x1) at (1,3.5) [punt,label=left:{$x_1$}] {};
\node at (-0.5,3.5) {$C$};%{$\mc C_{R_1}$};
\node at (2.2,3.75) {$f$};
   \node (y3) at (3,1.2) [punt,label=right:{$y_3$}] {};
   \node (y2) at (3,2.2) [punt,label=right:{$y_2$}] {};
   \node (y1) at (3,3.2) [punt,label=right:{$y_1$}] {};
\tikzstyle{every edge}=[draw,->,>=stealth,shorten >=3pt,shorten <=3pt]
  \path
       (x1) edge (y1)
       (x2) edge (y1)
       (x3) edge (y2)
       (x4) edge (y2)
       (x5) edge (y2)
       (x6) edge (y3)
;
\end{tikzpicture}
\begin{tikzpicture}
\tikzstyle{punt}=[circle,draw,fill,thick,minimum size=0.7mm,inner sep=0pt]
   \draw [rounded corners=10] (0,0.5) -- (1.5,0.5) -- (1.5,4) -- (0,4) --  cycle;
   \node (x6) at (1,1) [punt,label=left:{$x_6$}] {};
   \node (x5) at (1,1.5) [punt,label=left:{$x_5$}] {};
   \node (x4) at (1,2) [punt,label=left:{$x_4$}] {};
   \node (x3) at (1,2.5) [punt,label=left:{$x_3$}] {};
   \node (x2) at (1,3) [punt,label=left:{$x_2$}] {};
   \node (x1) at (1,3.5) [punt,label=left:{$x_1$}] {};
\node at (-0.5,3.5) {$C$}; %{$\mc C_{R_1}$};
\node at (2.2,3.75) {$g$};
   \node (y3) at (3,1.2) [punt,label=right:{$y_3$}] {};
   \node (y2) at (3,2.2) [punt,label=right:{$y_2$}] {};
   \node (y1) at (3,3.2) [punt,label=right:{$y_1$}] {};
\tikzstyle{every edge}=[draw,->,>=stealth,shorten >=3pt,shorten <=3pt]
  \path
       (x1) edge (y2)
       (x2) edge (y1)
       (x3) edge (y1)
       (x4) edge (y2)
       (x5) edge (y3)
       (x6) edge (y2)
;
\end{tikzpicture}
\caption{Functions $f$ and $g$ of companion skeletons, cf.\ Lemma~\ref{lem:condition-iii}} \label{fig:edge-switching}
\end{figure}
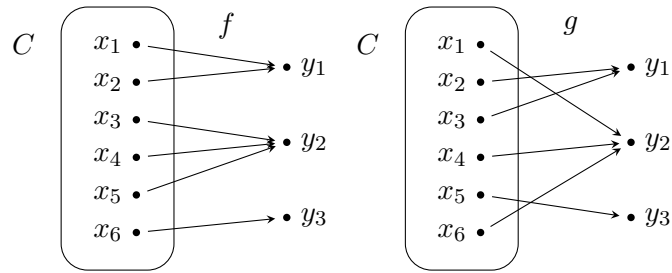

Companion edge swapping is a special case of the following general result.

\begin{thm} \label{thm:faithful-functions}
Two directed graphs $G=(V,E)$ and $G'=(V',E')$ are isomorphic
if and only if there are companion skeletons $\sigma=(\mc O,R, f)$ over set $V$ and
$\tau=(\mc P,Q, g)$ over set $V'$ such that $G=G_\sigma$ and $G'=G_\tau$.
\end{thm}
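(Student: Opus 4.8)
The plan is to establish the two implications of the ``if and only if'' separately, letting a single vertex bijection do all the work in each case. \emph{Companion skeletons give isomorphic graphs.} Assume $\sigma=(\mc O,R,f)$ over $V$ and $\tau=(\mc P,Q,g)$ over $V'$ are companions, with $G=G_\sigma$ and $G'=G_\tau$, and let $\eta\colon V\to V'$ be the vertex bijection of Definition~\ref{def:graph-companions}; it extends to a faithful correspondence $\mc O^\cap\to\mc P^\cap$ and satisfies $\eta(O_{f(x)})=P_{g(\eta(x))}$ for all $x\in V$. I would show that $\eta$ is itself a graph isomorphism $G_\sigma\to G_\tau$: starting from the definition of the edge set of the graph defined by a skeleton, $(x,w)\in E(G_\sigma)$ iff $w\in O_{f(x)}$; since $\eta$ is a bijection on $V$ this is equivalent to $\eta(w)\in\eta(O_{f(x)})$ (the set-extension of $\eta$); and $\eta(O_{f(x)})=P_{g(\eta(x))}$, so this says exactly $(\eta(x),\eta(w))\in E(G_\tau)$. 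Hence $\eta\times\eta$ is a bijection $V\times V\to V'\times V'$ carrying $E(G_\sigma)$ onto $E(G_\tau)$, so $G\cong G'$. This in particular recovers the companion edge-swapping observation made just before the theorem.

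\emph{Isomorphic graphs give companion skeletons.} Let $\psi\colon V\to V'$ be an isomorphism $G\to G'$; then $\out_{G'}(\psi(x))=\psi(\out_G(x))$ for every $x$, so the set-extension of $\psi$ carries $\mc O(G)$ onto $\mc O(G')$. Take the canonical skeleton $\sigma=(\mc O(G),R,f)$ of $G$ described in the text (choose representatives $R$ for $\mc O(G)$, and $f(x)=r$ iff $\out_G(x)=O_r$), so that $G=G_\sigma$. Transport it across $\psi$ by setting $\mc P=\mc O(G')$, $Q=\psi(R)$, $P_{\psi(r)}=\psi(O_r)$, and $g=\psi\circ f\circ\psi^{-1}$. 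I would then check the routine facts that $\tau=(\mc P,Q,g)$ obeys the skeleton axioms — the $P_q$ are well defined because $\psi$ is injective, $|\mc P|\le|V'|$, and $g$ is a surjection onto $Q$ because $f$ is a surjection onto $R$ — and that $G_\tau=G'$, since $P_{g(\psi(x))}=P_{\psi(f(x))}=\psi(O_{f(x)})=\psi(\out_G(x))=\out_{G'}(\psi(x))$. Finally, $\psi$ witnesses that $\sigma$ and $\tau$ are companions: its set-extension preserves cardinalities and intersections and maps $\mc O(G)$ onto $\mc P$, hence maps $\mc O(G)^\cap$ onto $\mc P^\cap$ and is a faithful correspondence, and $\psi(O_{f(x)})=\out_{G'}(\psi(x))=P_{g(\psi(x))}$ is exactly the identity $\eta(O_{f(x)})=P_{g(\eta(x))}$ required of $\eta=\psi$.

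Neither implication conceals a deep difficulty; the content is in the bookkeeping. In the second implication I expect the fussiest points to be verifying that the transported triple $\tau$ is a legitimate skeleton — in particular that $q\mapsto P_q$ is a well-defined bijection and that $g$ is onto — and that $\psi$ restricted to $\mc O(G)^\cap$ is a bijection onto $\mc P^\cap$ rather than merely an injection into it. In the first implication the only subtlety is to keep straight that the $\eta$ inside $\eta(O_{f(x)})$ is the set-extension, which is what legitimizes the step $w\in O_{f(x)}\iff\eta(w)\in\eta(O_{f(x)})$. As an alternative to the explicit transport, one could take any skeleton $\tau'$ for $G'$ and invoke Lemma~\ref{lem:same-graph-comp} together with transitivity of companionship, but the direct construction seems shortest.
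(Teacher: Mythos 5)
Your proof is correct. The forward direction (isomorphism $\Rightarrow$ companion skeletons) is essentially the paper's own argument: both transport the canonical skeleton of $G$ across the isomorphism $\psi$ by setting $Q=\psi(R)$, $P_{\psi(r)}=\psi(O_r)$ and $g=\psi\circ f\circ\psi^{-1}$, then verify that the resulting triple is a skeleton defining $G'$ and that $\psi$ itself is the required faithful correspondence. The converse is where you genuinely diverge. You observe that the vertex bijection $\eta$ of Definition~\ref{def:graph-companions} is already a graph isomorphism, via $(x,w)\in E(G_\sigma)\iff w\in O_{f(x)}\iff \eta(w)\in\eta(O_{f(x)})=P_{g(\eta(x))}\iff(\eta(x),\eta(w))\in E(G_\tau)$; since the faithful correspondence is the element-wise extension of $\eta$, each step is legitimate, and this one-line argument needs neither Lemma~\ref{lem:condition-iii} nor Lemma~\ref{in-comp} and delivers the edge condition as a biconditional. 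The paper instead constructs an isomorphism $\Psi$ that may differ from $\eta$ pointwise: it partitions $V$ into the blocks $[x]_f\cap C$ ($C$ a companion set), invokes Lemma~\ref{lem:condition-iii} for equicardinality with the blocks $[\eta(x)]_g\cap\eta(C)$, glues \emph{arbitrary} bijections between corresponding blocks, and then uses Lemma~\ref{in-comp} to check that edges are preserved. What that longer route buys is the stronger fact that \emph{any} vertex bijection respecting the companion partition and the kernels of $f$ and $g$ block by block is an isomorphism --- the source of the ``companion edge swapping'' freedom exploited in the surrounding discussion and in the later constructions on posets and reaction systems. For the statement of the theorem as written, your direct argument suffices and is the cleaner of the two.
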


\begin{proof}
Let $\Psi:G\rar G'$ be an isomorphism. In a natural way we also have a faithful correspondence,
 $\Psi:\mc O(G)^\cap\rar \mc O(G')^\cap$ that is an extension of $\Psi:V\rar V'$. We write $\mc O=\mc O(G)$ and
 $\mc P= \mc O(G')$. Let $R\seq V$ be a set of representatives for $\mc O=\mc O(G)$ and
 define $f:V\rar R$ such that $f(x)=z$ iff $\out_G(x) = O_z$.
  Then $\sigma=(\mc O,R,f)$ is a skeleton and $G=G_\sigma$. Let $Q=\Psi(R)$ and set $g:V'\rar Q$ such that $g(x)=y$ if and only if $f(\Psi^{-1}(x))=\Psi^{-1}(y)$. In other words, for all $x\in V$,
 $\Psi(f(x))=g(\Psi(x))$. We observe that $\tau=(\mc P,Q, g)$ is a skeleton that is companion to
 $\sigma$. By definition of $g$ we have $g(V')=Q$ (if $q\in Q$, then there is $r\in R$ with
 $\Psi(r)=q$, and $x\in V$ with $f(x)=r$, so $g(\Psi(x))=q$) and $Q$ is a set of representatives of
  $\mc P$ by setting $P=P_{q}$ for $P\in \mc P$ if and only if $\Psi(O_r)=P$ and $\Psi(r)=q$.
  Moreover, $\eta=\Psi$ is the faithful correspondence such that $\Psi(O_{f(x)})= P_{\Psi(f(x))}=
  P_{g(\Psi(x))}$.
  Finally we see that $G_\tau=G'$. An edge $(x,y)\in E'$ if and only if $y\in \out_{G'}(x)=P_{q}$
  for some $q\in Q$. Because $\Psi$ is an isomorphism, $\Psi^{-1}(y)\in
  \out_G(\Psi^{-1}(x))=O_{f(\Psi^{-1}(x))}$. So $\Psi^{-1}(q)=f(\Psi^{-1}(x))$, and by definition of $g$,
  $g(x)=q$.

\medskip
 Conversely, suppose $\sigma=(\mc O,R, f)$ and
$\tau=(\mc P,Q, g)$ are companion skeletons and let $G=G_\sigma$ and $G'=G_\tau$.
 Then by Definition~\ref{def:graph-companions} there is a bijection $\eta: V\rar V'$ extending to a faithful correspondence $\eta:
 \mc O^\cap \rar \mc P^\cap$.
We claim that $\eta$ generates an isomorphism $\Psi: G\rar G'$.

Fix $x\in V$.
By Lemma~\ref{lem:condition-iii}  for every companion set $C$ with respect to $\mc O$
 we have that
$| [x]_f\cap C|=| [\eta(x)]_g\cap \eta(C)|$. Let $C=C_{\mc O}(x)$ be the companion set that contains $x$, and so $\eta(C)$ is a companion set with respect $\mc P$ that contains $\eta(x)$. We denote with $X_{x,C}= [x]_f\cap C$ and
$Y_{x,C} = [\eta(x)]_g\cap \eta(C)$.  Therefore, there is a bijection
$\Psi_{x,C}: X_{x,C} \rar Y_{x,C}$.
Observe that the companion classes with respect to $\mc O$ (and similarly $\mc P$) form a
partition of $V$ (also $V'$)
and so the sets $X_{x,C}$  (resp. $Y_{x,C}$) form a partition on $V$ (resp. $V'$).
 We can extend the bijections $\Psi_{x,C}$ to the whole set $V$:
define $\Psi: V\rar V'$ such that
 $\Psi|_{X_{x,C}}=
\Psi_{x,C}$ for all $x\in V$ and companion sets $C$ with respect $\mc O$. Since $\Psi$ is an extension of bijections of a partition of $V$, $\Psi$ is well defined and a bijection itself.
 Observe that by definition of $\Psi$, because $Y_{x,C}\subseteq \eta(C)$, for each $x\in V$, $\Psi(x)$ and $\eta(x)$ belong to the same companion set with respect to $\mc P$.

 It remains to show that if $(x,y)$ is an edge in $G=G_{\sigma}$ then
 $(\Psi(x), \Psi(y))$ is an edge in $G'=G_{\tau}$.
  Let $f(x)=y'$, i.e., $y'\in f(V)$. By the skeleton definition, $y'$ is a representative for  $O=O_{y'}$ where $O\in \mc O$. By definition of $G=G_\sigma$,  $O=O_{f(x)}=\out_G(x)$ and so $y\in O$.
 % Then $\eta(y')$ is in $g(V)$ because $\eta(f(V))=g(V)$.
By the definition of $\Psi$,
 $\Psi(x)=\Psi_{x,C}(x)\in Y_{x,C}$, i.e., $\Psi(x)\in [\eta(x)]_g$ and so $g(\Psi(x))=g(\eta(x))$.
 By definition of $G'=G_\tau$, and
 because $G_\tau$ and $G_\sigma$ are companion graphs,
 $\out_{G'}(\Psi(x))=P_{g(\Psi(x))}=P_{g(\eta(x))}=\eta(O_{f(x)})=\eta(O)$. But $\eta(y)\in \eta(O)$ and therefore there is an edge $(\Psi(x),\eta(y))$
 in $G'$.
Then,  by definition of $\Psi$, as observed above,  $\Psi(y)$
belongs to the same companion set as $\eta(y)$. Now by Lemma~\ref{in-comp},
 $\inn_{G}(\eta(y))=\inn_{G'} (\Psi(y))$, hence
there is an edge $(\Psi(x),\Psi(y))$ in $G'=G_\tau$.
With this we conclude that $\Psi$ is an isomorphism from $G$ to $G'$.
 \end{proof}

Isomorphic graphs have companion skeletons, regardless which skeleton we choose. 

\begin{coro} \label{coro:all-skeletons}
Let $G=(V,E)$ and $G'=(V',E')$ be isomorphic graphs. If  $\sigma=(\mc O,R, f)$ and
$\tau=(\mc P,Q, g)$  are skeletons  over set $V$ and $V'$, respectively, such that $G=G_\sigma$ and $G'=G_\tau$ then $\sigma$ and $\tau$ are companions.
\end{coro}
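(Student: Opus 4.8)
The plan is to obtain the corollary as an immediate consequence of Theorem~\ref{thm:faithful-functions}, Lemma~\ref{lem:same-graph-comp}, and the fact (recorded right after Definition~\ref{def:graph-companions}) that being companion skeletons is an equivalence relation on skeletons. The point is that all the real work has already been done: Theorem~\ref{thm:faithful-functions} produces \emph{some} pair of companion skeletons for $G$ and $G'$, while Lemma~\ref{lem:same-graph-comp} says that any two skeletons of the \emph{same} graph over the same vertex set are companions; transitivity then glues an arbitrary pair of skeletons to the one supplied by the theorem.

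First I would apply the direction ``isomorphic $\Rightarrow$ companion skeletons exist'' of Theorem~\ref{thm:faithful-functions} to $G$ and $G'$: there are companion skeletons $\sigma_0=(\mc O_0,R_0,f_0)$ over $V$ and $\tau_0=(\mc P_0,Q_0,g_0)$ over $V'$ with $G=G_{\sigma_0}$ and $G'=G_{\tau_0}$. I would then note that the underlying family of a skeleton of a graph is forced, since $\mc O(G_\rho)=\mc O_\rho$ for every skeleton $\rho$; hence from $G=G_\sigma=G_{\sigma_0}$ we get $\mc O=\mc O(G)=\mc O_0$, and similarly $\mc P=\mc O(G')=\mc P_0$. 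Only the chosen representatives and the surjections may differ between $\sigma$ and $\sigma_0$ (and between $\tau$ and $\tau_0$), which is exactly the situation handled by Lemma~\ref{lem:same-graph-comp}.

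Next I would apply Lemma~\ref{lem:same-graph-comp} twice. The skeletons $\sigma$ and $\sigma_0$ are both over $V$ and satisfy $G_\sigma=G=G_{\sigma_0}$, so they are companions; likewise $\tau$ and $\tau_0$ are over $V'$ with $G_\tau=G'=G_{\tau_0}$, so they are companions. Finally I would chain the three relations: $\sigma$ is a companion of $\sigma_0$, $\sigma_0$ is a companion of $\tau_0$ by the choice above, and $\tau_0$ is a companion of $\tau$; since ``companion skeletons'' is an equivalence relation, transitivity yields that $\sigma$ and $\tau$ are companions, which is the assertion.

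There is essentially no obstacle. The only thing one must be confident about is transitivity of the companion relation: that composing the two faithful correspondences along the composition of the underlying vertex bijections again gives a faithful correspondence satisfying $\eta(O_{f(x)})=P_{g(\eta(x))}$. This is precisely the remark following Definition~\ref{def:graph-companions}, and it is routine to check directly from Definition~\ref{def:faithful} if a reader wants the details spelled out.
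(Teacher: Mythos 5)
Your proposal is correct and follows exactly the paper's own argument: invoke Theorem~\ref{thm:faithful-functions} to produce one pair of companion skeletons for $G$ and $G'$, apply Lemma~\ref{lem:same-graph-comp} to relate these to the given $\sigma$ and $\tau$, and conclude by transitivity of the companion relation. The paper's proof is just a more terse version of the same three steps.
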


\begin{proof}
As $G$ and $G'$ are isomorphic by Theorem~\ref{thm:faithful-functions} there exist companion skeletons for $G$ and $G'$. By Lemma~\ref{lem:same-graph-comp} these are companions to $\sigma$ and $\tau$ respectively. The result follows by transitivity of companionship of skeletons.
\end{proof}

%%%%%%%%%%%%%%%
\section{Partially ordered sets}\label{posets}
%%%%%%%%%%%%%%%

In this section we consider partially ordered sets that prepares our discussion on reaction systems
 in the next section. The skeletons and their corresponding graphs have  a  special structure. The nodes are elements from a partially ordered set. Additionally we assume that out-sets of the graphs
  are cones in that partial order. This has a convenient benefit that  the minimal element of a cone 
  can be taken as  a natural representative of that cone.
\medskip

Let $(P,\le)$ be a partially ordered finite set (poset).
For $x\in P$ we define $\up (x)=\{\, y\in P \mid   x \le y \}$ to be the {\it upper cone of $x$} or simply just the {\it cone of $x$}.
 Let $R\subset P$. We define a set of cones {\it based at $R$}
to be  $\up[R]=\{\;\up(x)\,|\, x\in R\;\}$. The poset $P$ is called an {\it upper semi-lattice}  if every subset of $P$ has a least upper bound. Observe that in the case when $P$ is an upper semi-lattice, an intersection of two cones of elements $x$ and $y$ in $P$
 is a cone of the least upper bound $z$ of $\{x,y\}$, i.e.,  smallest $z$  such that $x\le z$ and $y\le z$.
In other words, an upper semi-lattice is closed under intersection of cones.

\medskip\noindent
{\bf Convention.} All posets $P$ considered here are upper semi-lattices.

\medskip
Consider $\mc O_R=\up [R]$ for some $R\subseteq P$, a family of sets consisting of cones based at $R$.
The set of companions of $x\in R$  with respect to $\mc O_R$ becomes $C_{\mc O_R}(x) = C_R(x)= \up(x)\setminus \left( \bigcup_{\substack{y\in  R\\ x\neq y}} \up(y)\right)$.  If $x \in R$, $x$ itself is the smallest element of its  companion equivalence class, and it is called the {\em main representative} of $C_{\mc O_R}(x)$. Observe that for each $x\in R$, $C_{\mc O_R}(x) \cap R=\{x\}$, i.e., no two elements of $R$ are companions, and $R$ is a set of (main) representatives for $\up[R]$.

In particular, when $P$ is the poset $(2^S,\subseteq)$ for some finite set $S$ and  $X\in R\subseteq 2^S$, the set of companions of $X$ with respect to $\mc O_{R}=\up[R]$
 equals
$C_{\mc O_{R}}(X) = \up(X)\setminus \left( \bigcup_{\substack{Y\in R\\ X\subsetneq Y}} \up(Y)\right)$.

\begin{exam} \label{exam:companion}
We consider the poset $(2^S,\subseteq )$ for $S=\{a,b,c\}$.
Let $R=\{\, \{a\}, \{b,c\}\,\}$, and
let $\mc O_{R} = \up[R]$.
Note $\mc O_{R}^\cap = \mc O_{R} \cup \{  \{ a,b,c \}  \}$.
Let $X=\{a\}$.
Then $C_{R}(X)=\{\, \{a\}, \{a,b\}, \{a,c\} \,\}$ as illustrated in Fig.~\ref{fig:set-vs-cones}(left). 
Although $\{a,b,c\}$ is not in $R$, it is no companion for $X$ because it contains a subset $\{b,c\}$ which is in $R$. Note that $\{b,c\}$ has no companions except itself, and $\varnothing$ is a companion to both $\{b\}$ and $\{c\}$.
\qed
\end{exam}

\begin{figure}[h]
\begin{tikzpicture}
\tikzstyle{punt}=[circle,draw,fill,thick,minimum size=0.9mm,inner sep=0pt]
   \draw(-3,2) -- (0,-1)  -- (3,2) ;
   \node at (0,-1)  [punt,label=below:{\small $\varnothing$}] {};
   \draw  [blue]  (-2.75,2.75) -- (-0.75,0.75)  -- (1.5,3) ;
   \node at (-0.75,0.75)  [punt,label=above:{\small $\{a\}$}] {};
   \draw [red] (-1.5,3) -- (0.75,0.75)  -- (2.75,2.75) ;
   \node at (0.75,0.75)  [punt,label=above:{\small $\{b,c\}$}] {};
   \node at (0,1.50)  [punt,label=above:{\small $\{a,b,c\}$}] {};
   \node at (-0.3,-0.2)  [punt,label=above:{\small $\{b\}$}] {};
   \node at (0.3,0)  [punt,label=above:{\small $\{c\}$}] {};
   \node at (-1.2,1.8)  [punt,label=above:{\small $\{a,b\}$}] {};
   \node at (-2.0,2.6)  [punt,label=above:{\small $\{a,c\}$}] {};
\end{tikzpicture} 
\begin{tikzpicture}
\tikzstyle{punt}=[circle,draw,fill=blue!20,thick,minimum size=0.5mm,inner sep=0pt]
   \draw [rounded corners=10] (-2.5,-1.5) -- (4,-1.5) -- (4,2) -- (-2.5,2) --  cycle;
   \draw [blue] (0,0) ellipse (2 and 1);
   \draw [red] (1.5,0.5) ellipse (2 and 1);
   \node at (-1.0,0.1) [punt,label=below:{$1$}] {};
   \node at (-0.5,-0.2) [punt,label=below:{$2$}] {};
   \node at (-0.0,-0.5) [punt,label=below:{$3$}] {};
   \node at (0.8,0.4) [punt,label=below:{$4$}] {};
   \node at (2.5,0.8) [punt,label=below:{$5$}] {};
   \node at (2.5,-0.7) [punt,label=below:{$6$}] {};
   \node at (3.0,-0.7) [punt,label=below:{$7$}] {};
   \node at (3.5,-0.7) [punt,label=below:{$8$}] {};
\end{tikzpicture}
\caption{Venn diagram of the out-sets of a graph, and collection of cones which is in faithful correspondence, see Example~\ref{ex:embedding}} 
\label{fig:set-vs-cones}
\end{figure}
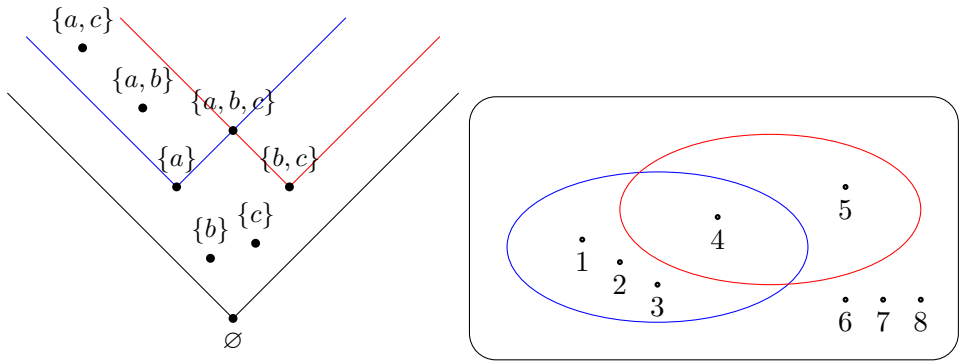

In the remainder of the paper we consider graphs on partial orders.
Let $(P,\le)$ be a poset and $f:P\to P$ a function on $P$. Then $G_f = (P,f)$ is the usual one-out graph associated with $f$. As we have observed, cones in the partial order have a natural representative, so a family of cones has a natural set of representatives.
The \emph{main skeleton} associated with $G_f$ equals $\sigma = (\up[R], R, f)$, with $f(P)=R$. It defines the graph $G_{\sigma}$ with edges $\{ (x,y) \mid x,y\in P, f(x) \le y \}$.
Note that element $x$ is a member of the set $\up(x)$ it represents, and hence $G_f$ is a subgraph of $G_{\sigma}$.

Conversely, if $G$ is a graph with nodes $P$, such that its out-sets are cones in poset $(P,\le)$, then $G$ has a unique main skeleton $\sigma$ such that $G_{\sigma} = G$.

Thus in the case when out-sets are cones,  the main skeleton is fixed by the function $f: P\to P$. This has a consequence for the notion of companions. We say that two functions $f,g: P\to P$ are companions if the main skeletons they define are \emph{companions}, i.e., if there is a bijection $\eta$ on $P$ such that $\eta( \up( f(z)) ) = \up(g(\eta(z)))$ for $z\in P$.
Identifying a function $f$ with its one-out graph $G_f$, we call $G_f$ and $G_g$ {\em companions}
when $f$ and $g$ are.

Summarizing, graphs that have their nodes from a partial order, such that the out-sets all are cones in the partial order have an efficient `summary' where each node is mapped to the minimal element of its out-set. Such a skeleton is unique, given the graph.

\bigskip
We can characterize graphs that are isomorphic to graphs that are defined by (main) skeletons $G_\alpha$. It suffices to consider the structure of the family of out-sets.

\begin{lem} \label{lem:iso-iff-faithful}
Let $P$ be a poset. A graph $G$ is isomorphic to a graph $G_\alpha$,
where $\alpha$ is a main skeleton over $P$ if and only if
$|V(G)| = |P|$ and there exists a faithful correspondence between $\mc O(G)$ and $\up[R]$ for some $R\seq P$.
\end{lem}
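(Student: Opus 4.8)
The plan is to establish the two implications separately. The forward direction is essentially a restatement of what is already encoded in Theorem~\ref{thm:faithful-functions}; the converse is where the upper semi-lattice structure of $P$ enters, through the fact that an intersection of cones is again a cone, so that every member of $\up[R]^\cap$ is a cone.

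\emph{Forward direction.} Suppose $G$ is isomorphic to $G_\alpha$ for a main skeleton $\alpha=(\up[R],R,f)$ over $P$, via an isomorphism $\Psi$. Then $\Psi$ is in particular a bijection $V(G)\to P$, so $|V(G)|=|P|$. As noted in the proof of Theorem~\ref{thm:faithful-functions}, the power-set extension of $\Psi$ restricts to a faithful correspondence $\mc O(G)^\cap\to\mc O(G_\alpha)^\cap$; and since $\out_{G_\alpha}(x)=\up(f(x))$ for every $x\in P$, we have $\mc O(G_\alpha)=\{\up(r)\mid r\in f(P)\}=\up[R]$. Hence $\Psi$ is a faithful correspondence between $\mc O(G)$ and $\up[R]$ in the sense of Definition~\ref{def:faithful}, which is what is required. (Equivalently, one may fix any skeleton $\sigma$ with $G=G_\sigma$, invoke Corollary~\ref{coro:all-skeletons} to conclude $\sigma$ and $\alpha$ are companions, and read the faithful correspondence off Definition~\ref{def:graph-companions}.)

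\emph{Converse direction.} Assume $|V(G)|=|P|$ and fix a faithful correspondence $\eta\colon\mc O(G)^\cap\to\up[R]^\cap$ for some $R\seq P$. By Lemma~\ref{lem:non-empty-companions} there is a bijection $\varphi\colon V(G)\to P$ whose extension to power sets agrees with $\eta$ on $\mc O(G)^\cap$. Transport $G$ along $\varphi$: let $G'$ be the graph on vertex set $P$ with $(p,q)\in E(G')$ iff $(\varphi^{-1}(p),\varphi^{-1}(q))\in E(G)$, so that $\varphi$ is an isomorphism $G\to G'$ and $\out_{G'}(p)=\varphi\big(\out_G(\varphi^{-1}(p))\big)$ for all $p\in P$. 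Since $\out_G(\varphi^{-1}(p))\in\mc O(G)\seq\mc O(G)^\cap$, this equals $\eta\big(\out_G(\varphi^{-1}(p))\big)\in\up[R]^\cap$. Because $P$ is an upper semi-lattice, the intersection of two cones is the cone of their least upper bound, so $\up[R]^\cap$ consists entirely of (nonempty) cones; hence every out-set of $G'$ is a cone in $P$. As noted above (a graph with vertex set $P$ whose out-sets are all cones in $P$ is of the form $G_\alpha$ for a unique main skeleton $\alpha$ over $P$, namely $g(p)=\min\out_{G'}(p)$), we obtain $G'=G_\alpha$, and therefore $G\cong G'=G_\alpha$.

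The only subtle point is the bookkeeping between the faithful correspondence, which is defined on intersection closures, and the vertex bijection it must come from: that gap is bridged by Lemma~\ref{lem:non-empty-companions}. A minor degenerate case, $\varnothing\in\mc O(G)$, cannot occur under the hypothesis, since a faithful correspondence would send $\varnothing$ to a size-zero cone and cones are nonempty; this is in any case subsumed by the step showing that each $\out_{G'}(p)\in\up[R]^\cap$ is a cone.
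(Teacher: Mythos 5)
Your proof is correct, and its skeleton matches the paper's: both directions hinge on Lemma~\ref{lem:non-empty-companions} to upgrade the faithful correspondence $\eta$ to a vertex bijection $\varphi$. The difference is in how the converse is finished. The paper uses $\varphi$ to define a main skeleton $\alpha$ companion to a skeleton $\sigma$ of $G$ and then invokes the converse direction of Theorem~\ref{thm:faithful-functions} to get the isomorphism; you instead transport $G$ along $\varphi$ to a graph $G'$ on $P$, observe that every out-set of $G'$ lies in $\up[R]^\cap$ and hence is a cone (by the upper semi-lattice convention), and conclude $G'=G_\alpha$ from the uniqueness of main skeletons for cone-out-set graphs. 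Your route is more self-contained --- it does not need the (comparatively heavy) converse of Theorem~\ref{thm:faithful-functions}, since here the isomorphism is just $\varphi$ itself --- and it is more careful on a point the paper glosses over: the paper writes $\alpha=(\up[R],R,g)$ and asserts that $\eta(\out(x))$ is a cone $\up(z)$, but a faithful correspondence is only a bijection of intersection closures, so $\eta(\out(x))$ may land in $\up[R]^\cap\setminus\up[R]$; your version, which takes the representative set of $\alpha$ to be the set of apexes of the actual out-sets of $G'$, handles this correctly (and the lemma only asks for \emph{some} main skeleton over $P$, so this is harmless). The remark about $\varnothing\notin\mc O(G)$ is a nice touch consistent with the paper's standing assumption that graphs defined by skeletons have no isolated vertices.
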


\begin{proof}
Obviously the out-sets of the graph $G_\alpha$ consist of upper cones of $P$, hence the forward implication. Conversely, assume $G$ a graph  over $V=V(G)$ with $|P|$ nodes and such that there exists a faithful correspondence $\eta$ between $\mc O(G)$ and $\up[R]$ for some $R\seq P$.
By Lemma~\ref{lem:non-empty-companions} we can find a bijection $\varphi$ between $V(G)$ and $P$ whose extension corresponds to $\eta$.

Let $\sigma = (\mc O(G),Q,f)$ be a skeleton for $G$.
We define a main skeleton $\alpha = (\up[R],R,g)$ over $P$ which is a companion to $\sigma$; from this the result follows by the characterization in Theorem~\ref{thm:faithful-functions}.

For each $x\in V$ the out-set $\out(x)= O_{f(x)}$ of $x$ in $G$ corresponds via $\eta$, and $\varphi$, to a cone $\up(z)$ for some $z\in P$. Now define $g(\varphi(x))$ to be $z$. Then $\sigma$ and $\alpha$ are companions by construction. 
\end{proof}

\begin{exam} \label{ex:embedding}
Let graph $G$ with node set $V = \{1,2,\dots, 8\}$ be given by the following adjacency matrix.
%
%\medskip
%\centerline{$
$$
\begin{array}{c|*8c}
  & 1 & 2 & 3 & 4 & 5 & 6 & 7 & 8   \\\hline
 1 &   &   &   & \times & \times &   &   &    \\
 2 & \times & \times & \times & \times &   &   &   &    \\
 3 & \times & \times & \times & \times & \times & \times & \times & \times  \\
 4 &   &   &   & \times & \times &   &   &  \\
 5 & \times & \times & \times & \times &   &   &   &    \\
6 & \times & \times & \times & \times & \times & \times & \times & \times \\
 7 &   &   &   & \times & \times &   &   &   \\
 8 & \times & \times & \times & \times &   &   &   &  \\
\end{array}
$$
%}
%\bigskip
%
Then $\mc O(G) = \{ V, \{1,2,3,4\}, \{4,5\} \}$,
and $\mc O^\cap(G) = \mc O(G) \cup \{\{ 4 \}\}$ (see Fig.~\ref{fig:set-vs-cones}(right)).
 Now $\mc O(G)$ defines the following companion sets:
 $\{1,2,3\}$, $\{4\}$, $\{5\}$, and $\{6,7,8\}$.

There is a faithful correspondence $\eta$ between $\mc O(G)$ and a family of upper cones $R$ within $P = ( 2^{\{a,b,c\}},\subseteq )$. Take $R = \{\, \varnothing, \{a\},\{b,c\}, \{a,b,c\} \,\}$.  Then $\up[R] =  \{ \up(\varnothing), \up(\{a\}), \up(\{b,c\}, \up(\{a,b,c\}\}$ contains sets of size $8$, $4$, $2$ and $1$, respectively, matching those in $\mc O^\cap(G)$.

Sets in $\mc O(G)$ and $\up[R]$ are illustrated in Fig.~\ref{fig:set-vs-cones}.
Let $\varphi:V\to 2^{\{a,b,c\}}$ be the bijection as shown in the table below,
extended to sets matching $\eta$.

\medskip
\centerline{$
\begin{array}{c|ccc|c|c|ccc}
x  & 1 & 2 & 3 & 4 & 5 & 6 & 7 & 8   \\\hline
\varphi(x) & \{a\} & \{a,b\} & \{a,c\} & \{a,b,c\} & \{b,c\} & \varnothing & \{b\} & \{c\}
\\
g(\varphi(x))=z & \{b,c\} & \{a\} & \varnothing & \{b,c\} & \{a\} & \varnothing & \{b,c\} & \{a\}
\end{array}
$}
\medskip

The bottom  rows of the above table define the main skeleton on $P = ( 2^{\{a,b,c\}},\subseteq )$ that defines a graph isomorphic to $G$.
In this table we can swap the elements $\varphi(x)$ within each companion set, and obtain the 
 main skeleton for a different but an isomorphic graph. 
\qed
\end{exam}

%%%%%%%%%%%%%%%
\section{Reaction Systems}
%%%%%%%%%%%%%%%

We start by recalling some basic notions of reaction systems \cite{RS}.
A reaction is formalized as a triplet that represent the {\it reactant}, {\it inhibitor} and {\it product}, respectively. Whenever all reactants and none of the inhibitors are present, the reaction will yield the product.  The effect of separate reactions is cumulative, the union of the products for applicable reactions. More precisely, we have the following.

\begin{defn}
A \emph{reaction system} (RS) is a pair $\mathcal A = (S, A)$ where $S$ is a finite set, the \emph{background set}, and $A \subseteq (2^S\setminus \{\varnothing\}) \times (2^S\setminus \{\varnothing\}) \times2^S$ is a set of \emph{reactions} in $S$.

Let $X\subseteq S$.
For a reaction $a=(R,I,P)$ we say that $a$ is \emph{enabled in} $X$ iff $R\subseteq X$ and $I \cap X = \varnothing$. The \emph{result} of $a$ on $X$, denoted $\res_a(X)$, equals $P$ if $a$ is enabled in $X$, and $\varnothing$, otherwise.
The result of $X$ in $\mathcal A$ equals $\res_{\mathcal A}(X) = \bigcup_{a\in A} \res_a(X)$.
\end{defn}

Note that it is required that reactant and inhibitor are non-empty. This technical assumption has the consequence that no reaction is enabled in either $\varnothing$ or $S$, thus $\res_{\mathcal A}(\varnothing) =\res_{\mathcal A}(S) = \varnothing$.

Given a reaction system $\mc A=(S,A)$ define $\Res_{\mc A}\subseteq 2^S$ to be the set of all 
$Y\subseteq S$ such that there is $X$ with $\res_{\mc A}(X)=Y$. Note that 
$\varnothing \in \Res_{\mc A}$. Thus $\res_{\mc A}:2^S\rar \Res_{\mc A}$ is a surjection.

\begin{exam} \label{ex:tour-example}
We use the reaction system $\mc A = (S,A)$ from \cite[Example 7]{tour-rs}.
It has background set $S = \{1,2,3,4\}$, and six reactions belong to $A$:

\noindent
$a_1 = ( \{ 1 \},\{ 3 \},\{ 2 \} )$,
$a_2 = ( \{ 2 \},\{ 1 \},\{ 1 \} )$,
$a_3 = ( \{ 2 \},\{ 3 \},\{ 3 \} )$,
$a_4 = ( \{ 3 \},\{ 1,2 \},\{ 1,2,4 \} )$,
\\
$a_5 = ( \{ 4 \},\{ 3 \},\{ 1,2 \} )$, and
$a_6 = ( \{ 1,3 \},\{ 2,4 \},\{ 2,3 \} )$.

In $\{2,3,4\}$ only $a_2$ is enabled, so we have $\res_{\mc A}(\{2,3,4\}) = \{1\}$.
In $\{1,2,3\}$ no reactions are enabled, so $\res_{\mc A}(\{1,2,3\}) = \varnothing$.
In $\{1,2,4\}$ all three $a_1$, $a_3$ and $a_5$ are enabled, so $\res_{\mc A}(\{1,2,4\}) = \{1,2,3\}$.
\qed
\end{exam}

\begin{figure}[h]
\begin{tikzpicture}[scale =0.7]
%\tikzstyle{every node}=[draw,rectangle,rounded corners]
%\tikzstyle{every edge}=[draw,->,>=stealth,shorten >=3pt,shorten <=3pt]
\tikzstyle{every edge}=[draw,->,>=stealth]
  \node (0) at (9.0,4.0) {$\varnothing$} ;
  \node (a) at (4.0,3.0) {$\{1\}$} ;
  \node (b) at (6.5,2.0) {$\{2\}$} ;
  \node (c) at (16.0,2.0) {$\{3\}$} ;
  \node (d) at (0.0,3.0) {$\{4\}$} ;
  \node (ab) at (2.0,4.0) {$\{1,2\}$} ;
  \node (ac) at (6.5,4.0) {$\{1,3\}$} ;
  \node (ad) at (-0.5,5.0) {$\{1,4\}$} ;
  \node (bc) at (4.0,5.0) {$\{2,3\}$} ;
  \node (bd) at (14.0,5.0) {$\{2,4\}$} ;
  \node (cd) at (16.5,4.0) {$\{3,4\}$} ;
  \node (abc) at (11.0,4.0) {$\{1,2,3\}$} ;
  \node (abd) at (14.0,3.0) {$\{1,2,4\}$} ;
  \node (acd) at (11.0,2.0) {$\{1,3,4\}$} ;
  \node (bcd) at (2.0,2.0) {$\{2,3,4\}$} ;
  \node (S) at (11.0,6.0) {$\{1,2,3,4\}$};
  \path[every node/.style={draw=none}]
%%       (0) edge node[auto] {} (a)
       (0) edge [loop,in=135,out=90, min distance=1.5cm]  (0)
       (a) edge (b)
       (b) edge (ac)
       (c) edge (abd)
       (d) edge (ab)
       (ab) edge (bc)
       (ac) edge (bc)
       (ad) edge (ab)
       (bc) edge (a)
       (bd) edge (abc)
       (cd) edge (abd)
       (abc) edge (0)
       (abd) edge (abc)
       (acd) edge (0)
       (bcd) edge (a)
       (S) edge (0)
           ;
\end{tikzpicture}
\caption{Zero context graph $G_{\mc A}^0$ for the RS from Example~\ref{ex:tour-example}.}\label{fig:zero-context}
\end{figure}
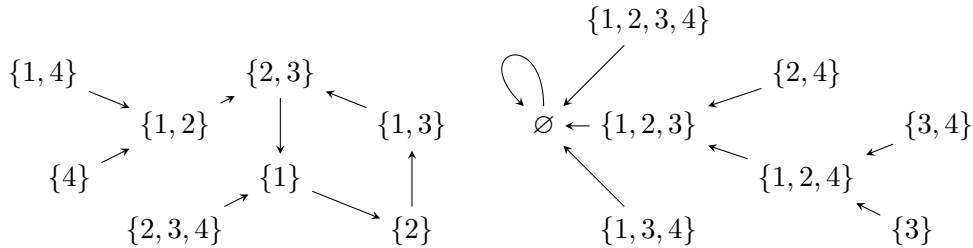

The dynamic behaviour of a reaction system is given by the notion of state sequence of an interactive process. Let $S$ be the background set. Then a {\it state sequence} is of the form $W_0,W_1, \dots, W_n$, where $W_i\subseteq S$, is such that $\res( W_{i}) \subseteq  W_{i+1}$ for $0\le i< n$. The intuition behind this computational process is as follows. In each step the new products are generated by the enabled reactions. Eements that are not produced by the reactions vanish, the so-called principle of {\it non-permanency}. On the other hand, in each step the context, the environment in which the reactions take place, may add new elements in the state of the system. Hence the 
new state of the system is a step from $W_{i}$ to any superset of $\res(W_{i})$.

From this perspective we introduce two graphs to represent the stepwise behavior of
 reaction systems,  without and with context.

\begin{defn}
For a RS $\mc A$ the {\em $0$-context graph of $\mc A$}  is the one-out graph
 $G_{\mc A}^0=(2^S,E)$
with edge set $E= \{\, (v,\res_{\mc A}(v)) \,|\, v\in 2^S\,\}$.

For a RS $\mc A$ the {\em transition graph of  $\mc A$} is the graph
 $G_{\mc A}=(2^S,E)$ with edge set $E= \{\, (v,w) \mid v\in 2^S,  \res_{\mc A}(v) \subseteq w \,\}$.
\end{defn}

When $\mc A$ is understood, we allow to drop the subscript in $\res$.

By definition the 0-context graph $G_{\mc A}^0$ is a subgraph of the transition graph $G_{\mc A}$ of the same reaction system. 

The link to one-out graphs $G_f$ and graphs $G_\alpha$ defined by the main
 skeleton $\alpha$ as defined in Section~\ref{posets}  is obtained as follows.
Defined on node set $2^S$, the $0$-context graph $G^0_{\mc A}$ equals the one-out graph $G_{\res_{\mc A}}$ defined by $\res_{\mc A}$, while the transition graph $G_{\mc A}$ is the graph defined by the main skeleton fixed by $\res_{\mc A}$ in the partial order $(2^S,\subseteq)$.

\begin{exam}
For the RS $\mc A$ from Example~\ref{ex:tour-example} the 0-context graph $G_{\mc A}^0$ is given in Figure~\ref{fig:zero-context}. The family $\Res_{\mc A}$ matches the family $\mc R$ of sets that have nonempty set of incoming edges. 
\qed
\end{exam}

As $G^0_{\mc A}$ is a one-out graph, it consists of one or more components, each of these components is tree-like, `ending in' a single cycle. By definition of reaction systems 
one component must have a loop at  $\varnothing$.
It turns out that virtually any graph on domain $2^S$ is a $0$-context graph of a reaction system. We only have to respect the special position of the minimal and maximal set $\varnothing$ and $S$.
This follows from a common construction in reaction systems, see, e.g., the implementation of a transition system in Section~4.2 of \cite{tour-rs}.

\begin{prop} \label{prop:one-out}
A one-out graph $G$ with vertex set $2^S$ is a 0-context graph of a RS if and only if there are two edges
 $(\varnothing,\varnothing), (S,\varnothing)$ in $G$.
\end{prop}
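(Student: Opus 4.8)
The forward implication is essentially a necessary-conditions check: if $G = G^0_{\mathcal A}$ for some RS $\mathcal A = (S,A)$, then $G$ has edge set $\{(v,\res_{\mathcal A}(v)) \mid v \in 2^S\}$, and since no reaction is enabled in $\varnothing$ or in $S$ (reactants and inhibitors are non-empty), we get $\res_{\mathcal A}(\varnothing) = \res_{\mathcal A}(S) = \varnothing$, so $(\varnothing,\varnothing)$ and $(S,\varnothing)$ are edges of $G$. The real content is the converse. First I would observe that a one-out graph $G$ on vertex set $2^S$ is exactly the same data as a function $r : 2^S \to 2^S$ (namely $r(v)$ is the unique out-neighbor of $v$), and the hypothesis says $r(\varnothing) = r(S) = \varnothing$. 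The plan is to build a reaction system $\mathcal A$ with $\res_{\mathcal A} = r$.

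The key construction: for each set $v \in 2^S$ with $v \ne \varnothing$ and $v \ne S$, and each element $s \in r(v)$, introduce a reaction $a_{v,s}$ that is enabled \emph{exactly} when the current state equals $v$, and whose product is $\{s\}$. A standard way to force ``enabled iff state $= v$'' (this is the construction alluded to from Section~4.2 of \cite{tour-rs}) is to take reactant $R = v$ (forcing $v \subseteq X$) and inhibitor $I = S \setminus v$ (forcing $(S\setminus v) \cap X = \varnothing$, i.e.\ $X \subseteq v$); together these give $X = v$. Note $R = v \ne \varnothing$ and $I = S\setminus v \ne \varnothing$ precisely because we excluded $v = \varnothing$ and $v = S$, so these are legal reactions. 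Then for any state $X$, the enabled reactions are exactly the $a_{X,s}$ for $s \in r(X)$ (when $X \notin \{\varnothing, S\}$), so $\res_{\mathcal A}(X) = \bigcup_{s \in r(X)} \{s\} = r(X)$; and for $X \in \{\varnothing, S\}$ no reaction is enabled, giving $\res_{\mathcal A}(X) = \varnothing = r(X)$ by hypothesis. (A harmless edge case: if $r(v) = \varnothing$ for some other $v$, simply add no reactions for that $v$; the empty union is $\varnothing$ as required.) Hence $G^0_{\mathcal A}$ has edge set $\{(v, \res_{\mathcal A}(v))\} = \{(v, r(v))\} = E(G)$, so $G^0_{\mathcal A} = G$.

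There is essentially no serious obstacle here — the proposition is a ``folklore'' encoding result — but the one point that needs care is verifying that the reactant/inhibitor non-emptiness constraint in the definition of a reaction system is met, and this is exactly why the special role of $\varnothing$ and $S$ enters: those two states are forced to map to $\varnothing$ because we \emph{cannot} write a reaction enabled only at $\varnothing$ (empty reactant) or only at $S$ (empty inhibitor). So the hypothesis $(\varnothing,\varnothing),(S,\varnothing) \in E(G)$ is not merely convenient but genuinely necessary, matching the forward direction. I would write the proof as: (i) forward direction via $\res_{\mathcal A}(\varnothing) = \res_{\mathcal A}(S) = \varnothing$; (ii) converse by exhibiting $\mathcal A = (S, \{a_{v,s} : v \in 2^S\setminus\{\varnothing,S\},\ s \in r(v)\})$ with $a_{v,s} = (v,\ S\setminus v,\ \{s\})$ where $r$ is the one-out map of $G$, then checking $\res_{\mathcal A}(X) = r(X)$ for all $X$ by the two cases $X \in \{\varnothing,S\}$ and $X \notin \{\varnothing,S\}$.
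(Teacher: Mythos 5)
Your proposal is correct and follows essentially the same route as the paper: the forward direction via $\res_{\mathcal A}(\varnothing)=\res_{\mathcal A}(S)=\varnothing$, and the converse via reactions with reactant $v$ and inhibitor $S\setminus v$ so that each reaction is enabled at exactly one state. The only (immaterial) difference is that the paper uses a single reaction $(X, S\setminus X, Y)$ per edge rather than splitting the product $Y$ into singletons.
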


\begin{proof}
For every RS we have $\res( \varnothing ) = \res ( S ) = \varnothing$ as no reactions are enabled in the empty set, and all reactions are inhibited in the full set $S$ due to requirement that reactant and inhibitor are non-empty. Therefore, every $0$-context graph of a RS contains edges  $(\varnothing,\varnothing), (S,\varnothing)$.

Consider a one-out graph with vertices $2^S$ containing edges  
$(\varnothing,\varnothing), (S,\varnothing)$.
Then we define a set of reactions matching the rest of the edges in the graph:
$A = \{\; (X, S\setminus X, Y) \mid (X,Y)\in E, X\neq \varnothing, S \;\}$. The complementarity of the first and second component of the reactions ensures that each reaction is enabled only at a single set, and that
$(X,Y)\in E$ if and only if $Y=\res(X)$.
\end{proof}

As a consequence of Lemma~\ref{lem:iso-iff-faithful} we can characterize graphs that are isomorphic to transition graphs of reaction systems.
Recall that a faithful correspondence maps companion sets between two families of sets, respecting their sizes.
As observed in Lemma~\ref{lem:iso-iff-faithful}, the out-sets of transition graphs must faithfully correspond to the structure of upper cones in $(2^S,\subseteq)$. 
Additionally, by Proposition~\ref{prop:one-out}, each transition graph must have edges $(\varnothing,\varnothing)$, $(S,\varnothing)$.
Similar edges must be present in any graph isomorphic to a transition graph, one in the intersection of all out-sets, the other in none of the out-sets (except the out-set that consists of all vertices). This characterization is given formally in the following thoerem.

\begin{thm} \label{thm:lattice-embedding}
A  graph $G=(V,E)$ is isomorphic to a transition graph of a reaction system if and only if 
\begin{enumerate}
\item
$|V|=2^n$ for some $n\ge 1$, 
\item
there is a
faithful correspondence $\eta$ between $\mc O(G)$ and a family of upper cones of $(2^{[n]},\subseteq)$,
\item
there is a vertex $v_{\bot}\in  %V(G)\setminus \bigcup(\mc O(G)\setminus \{V\}) = 
 V\setminus \bigcup_{\substack{X\in \mc O(G)\\ X\neq V}}$, and a vertex $v_\top\in \bigcap\mc O(G)$ such that $(v_{\bot},v_{\bot}), (v_{\top},v_{\bot})$ are edges in $G$.
\end{enumerate}
\end{thm}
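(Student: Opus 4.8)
The plan is to prove both directions by combining Lemma~\ref{lem:iso-iff-faithful} with Proposition~\ref{prop:one-out}, using Theorem~\ref{thm:faithful-functions} to transport the special edges along the isomorphism. For the forward direction, suppose $G$ is isomorphic to the transition graph $G_{\mc A}$ of a reaction system $\mc A = (S,A)$, say $|S| = n$; by the requirement that reactions have nonempty reactant and inhibitor we have $n\ge 1$, so $|V| = |2^S| = 2^n$, giving (1). The transition graph $G_{\mc A}$ is, by the remark following the definition of $G_{\mc A}$, the graph defined by the main skeleton fixed by $\res_{\mc A}$ in $(2^S,\subseteq)$; hence its out-sets are upper cones in $(2^S,\subseteq)$, and composing the isomorphism $G\to G_{\mc A}$ with the identity gives a faithful correspondence $\eta$ between $\mc O(G)$ and a family of upper cones of $(2^{[n]},\subseteq)$ (after renaming $S$ as $[n]$), establishing (2). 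For (3), recall $\res_{\mc A}(\varnothing) = \res_{\mc A}(S) = \varnothing$, so $G_{\mc A}$ has edges $(\varnothing,\varnothing)$ and $(S,\varnothing)$; moreover $\varnothing = \out_{G_{\mc A}}(\varnothing)$ lies in $\bigcap \mc O(G_{\mc A})$ (it is the minimal cone, the cone of $S$), while $S$ lies in no out-set except $\out_{G_{\mc A}}(S) = 2^S = V(G_{\mc A})$ — indeed $S$ is the maximal element and only the cone of $\varnothing$ contains it. Pulling $\varnothing$ and $S$ back along the isomorphism yields vertices $v_\top, v_\bot \in V$ with the stated membership properties and edges $(v_\bot,v_\bot),(v_\top,v_\bot)$ in $G$.

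For the converse, assume (1)--(3). By (1) write $|V| = 2^n = |2^{[n]}|$ with $n\ge 1$, and by (2) there is a faithful correspondence between $\mc O(G)$ and $\up[R]$ for some $R \subseteq 2^{[n]}$. Set $S = [n]$ and $P = (2^S,\subseteq)$. Apply Lemma~\ref{lem:iso-iff-faithful}: $G$ is isomorphic to a graph $G_\alpha$ where $\alpha$ is a main skeleton over $P$. Unwinding the proof of that lemma, the isomorphism is induced by the bijection $\varphi : V \to 2^S$ extending the faithful correspondence, and $G_\alpha$ is the graph defined by the main skeleton fixed by some function $\res : 2^S \to 2^S$, i.e.\ $E(G_\alpha) = \{(X,W) \mid X,W \subseteq S,\ \res(X) \subseteq W\}$. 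Now I must check that this $\res$ really is $\res_{\mc A}$ for some RS $\mc A$; equivalently, by Proposition~\ref{prop:one-out} applied to the $0$-context (one-out) graph $G_{\res}$ whose edges are $\{(X,\res(X)) \mid X\subseteq S\}$, it suffices to show $\res(\varnothing) = \res(S) = \varnothing$.

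This last verification is where the edge condition (3) does its work, and it is the step I expect to require the most care. The faithful correspondence $\eta$ and its extension $\varphi$ must send $v_\top$ to the minimal cone of $\up[R]$ and $v_\bot$ to the element lying in no proper cone — concretely, $\varphi(v_\top)$ is the representative of the smallest out-set, which under the main-skeleton identification is forced to be $S \in 2^S$ (the top of the lattice generating the minimal cone $\{S\}$), and $\varphi(v_\bot) = \varnothing$, since $v_\bot$ being in no out-set of $G$ except $V$ corresponds to $\varphi(v_\bot)$ being in no cone of $\up[R]$, which in $(2^S,\subseteq)$ only $\varnothing$ satisfies (provided $R$ is chosen so that $\up[R]$ includes the maximal cone $2^S$; this is automatic once $|\mc O(G)| $ includes the out-set equal to $V$, which it does because $(v_\bot,v_\bot)$ forces a self-loop and the component structure of a transition graph forces $v_\bot$ to reach a vertex whose out-set is everything — more simply, one includes $\up(\varnothing) = 2^S$ in $\up[R]$ as the image of the out-set $V$). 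The edges $(v_\bot,v_\bot)$ and $(v_\top,v_\bot)$ in $G$ then translate, under the isomorphism $G \cong G_\alpha$, into $\res(\varnothing) \subseteq \varnothing$ and $\res(S) \subseteq \varnothing$, hence $\res(\varnothing) = \res(S) = \varnothing$. By Proposition~\ref{prop:one-out} the one-out graph $G_{\res}$ is the $0$-context graph of a reaction system $\mc A$, and then $G_\alpha = G_{\mc A}$ by construction, so $G \cong G_{\mc A}$, completing the proof. The one genuinely delicate point to nail down is that the faithful correspondence can be taken to match $v_\top, v_\bot$ with the correct lattice elements $S, \varnothing$; this should follow because a faithful correspondence preserves inclusions among $\mc O^\cap$ (by condition (ii) of Definition~\ref{def:faithful}, $X \subseteq Y$ iff $X \cap Y = X$ iff $\eta(X)\cap\eta(Y) = \eta(X)$ iff $\eta(X)\subseteq\eta(Y)$) and preserves sizes, so the unique largest out-set $V$ maps to the unique largest cone $2^S$, and the vertex in the intersection of all out-sets maps into the intersection of all cones, which is $\{S\}$, pinning down $v_\top \mapsto S$; and $v_\bot$, lying outside all proper out-sets, maps outside all proper cones, forcing $\varphi(v_\bot) = \varnothing$.
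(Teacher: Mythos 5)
Your overall strategy coincides with the paper's: the forward direction is by construction, and the converse combines Lemma~\ref{lem:iso-iff-faithful} with Proposition~\ref{prop:one-out} after arranging the node bijection so that $v_\bot\mapsto\varnothing$ and $v_\top\mapsto S$. The converse, which is the substantive direction, is essentially sound. However, your verification of condition (3) in the forward direction has the roles of $\varnothing$ and $S$ reversed, and the supporting claims are false. In the transition graph $G_{\mc A}$ the out-sets are upper cones $\up(z)$ in $(2^S,\subseteq)$, so the \emph{top} element $S$ lies in \emph{every} out-set (since $z\subseteq S$ for all $z$), which is why $v_\top=S$; and the \emph{bottom} element $\varnothing$ lies in no cone except $\up(\varnothing)=2^S=V$, which is why $v_\bot=\varnothing$. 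You assert the opposite (``$\varnothing$ \dots lies in $\bigcap\mc O(G_{\mc A})$'' and ``only the cone of $\varnothing$ contains $[S]$''), and you also write $\out_{G_{\mc A}}(\varnothing)=\varnothing$, confusing the transition graph with the $0$-context graph: in fact $\out_{G_{\mc A}}(\varnothing)=\up(\res(\varnothing))=\up(\varnothing)=2^S$. With your pairing ($v_\top\leftrightarrow\varnothing$, $v_\bot\leftrightarrow S$) the membership requirements of (3) fail outright, since $S\notin V\setminus\bigcup_{X\neq V}X$ whenever $\mc O(G)$ contains a proper out-set. The fix is simply to swap the two assignments, as you in fact do (correctly) in the converse direction.

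Two smaller points in the converse deserve tightening. First, $\bigcap\up[R]$ is not in general the singleton $\{S\}$, and the outer region $2^S\setminus\bigcup\up[R]$ is not in general $\{\varnothing\}$; so $\varphi(v_\top)=S$ and $\varphi(v_\bot)=\varnothing$ are not \emph{forced}, only \emph{choosable}. What saves the argument — and what the paper uses — is that $S$ always lies in the companion class of $\up[R]$ corresponding to $C_{\mc O(G)}(v_\top)$ and $\varnothing$ in the one corresponding to $C_{\mc O(G)}(v_\bot)$, and Lemma~\ref{lem:non-empty-companions} leaves the freedom to pick $\varphi$ arbitrarily within companion classes. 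Second, when you translate the edges $(v_\bot,v_\bot)$ and $(v_\top,v_\bot)$ into $g(\varnothing)=g(S)=\varnothing$, it is cleaner to argue as the paper does, directly from the definition of $g$: the loop and the edge from $v_\top$ force $\out_G(v_\bot)=\out_G(v_\top)=V$, whose image is $\up(\varnothing)$, so $g(\varphi(v_\bot))=g(\varphi(v_\top))=\varnothing$ by construction, rather than relying on the isomorphism of Theorem~\ref{thm:faithful-functions} sending $v_\bot$ exactly to $\varnothing$ (that isomorphism is only controlled up to permutations within companion and kernel classes).
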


\begin{proof}
If $\mc A$ is a RS with background set $S$, and taking $v_\bot = \varnothing$ and $v_\top = S$,  requirements (1) to (3) by construction hold for the transition graph $G_{\mc A}$, so must hold for any graph isomorphic to it.

\smallskip
Assume $G$ is a graph as given in the statement. We show it is isomorphic to a transition graph of a RS.
Let $S = [n]$.
By (1,2) $G$ is isomorphic to a graph $G_\alpha$ over the poset $(2^S,\seq)$, where $\alpha =(\up[R], R, g)$ is a main skeleton. 
In order for $G_\alpha$ to be a transition graph, the function $g: 2^S \to 2^S$ must additionally satisfy $g(\varnothing) = \varnothing$ and $g(S) = \varnothing$, cf.\ Proposition~\ref{prop:one-out}.

Note that the skeleton $\alpha$ as constructed is based on the node to node bijection $\varphi$ between $V=V(G)$ and $2^S$ that is extending the set to a set bijection $\eta$ between $\mc O(G)$ and  $\up[R]$, see Lemma~\ref{lem:non-empty-companions}.
In constructing this bijection $\varphi$ there is freedom, as long as we respect companion sets.

Note that the incoming edge $(v_\top,v_\bot)$ to $v_\bot$ means that $v_\bot$ is an element of one of the out-sets. As we have chosen $v_\bot$ to be in none of the out-sets except $V(G)$ we know that $v_\bot$ only belongs to the companion set that is within $V(G)$ and none of the other sets from $\mc O(G)$. That companion set must match the same `outer' companion set $C_{\up[R]}(\varnothing)$.  That means we can take that $\varphi(v_\bot) = \varnothing$.
At the same time the only out-set that contains $v_\bot$ must be $V(G)$, so we conclude that in $G$ both $v_\bot$ and $v_\top$ have out-set $V(G)$. 

Similarly the intersection of all out-sets is a companion set which must correspond to the `inner' companion set $\bigcap \up[R]$, which contains $S$, and we may assume that $\varphi(v_\top) = S$. 

To conclude we follow the
 proof of Lemma~\ref{lem:iso-iff-faithful}. For each $x\in V(G)$ with out-set $X$ in that proof we set $g(\varphi(x)) = z$ where $\up(z)=\varphi(X)$ is the set in $\up[R]$ that corresponds to $X$. If we apply this to $v_\bot$ we set $g(\varphi(v_\bot)) = g(\varnothing) = \varnothing$ as $v_\bot$ has 
 out-set $V(G)$ which must correspond to $\up(\varnothing)$. Same holds for $v_\top$, thus $g(S) = \varnothing$, as required.
\end{proof}

As an immediate application of Theorem~\ref{thm:faithful-functions} we can characterize when reaction systems have isomorphic transition graphs.

\begin{thm}
For reaction systems $\mc A$ and $\mc A'$, their
transition graphs $G_{\mc A}$ and $G_{\mc A'}$ are isomorphic
if and only if the $0$-context graphs $G^0_{\mc A}$ and $G^0_{\mc A'}$ are companions.
\end{thm}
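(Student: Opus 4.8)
The plan is to read the statement off from the dictionary, already set up in Section~\ref{posets} and in the paragraphs preceding the theorem, between reaction systems and one-out graphs on the Boolean lattice, combined with the skeleton machinery of Theorem~\ref{thm:faithful-functions}. Two identifications do all the work: (i) the $0$-context graph $G^0_{\mc A}$ is literally the one-out graph $G_{\res_{\mc A}}$ of the result function $\res_{\mc A}\from 2^S\to 2^S$; and (ii) the transition graph $G_{\mc A}$ is exactly the graph $G_{\alpha_{\mc A}}$ defined by the main skeleton $\alpha_{\mc A}=(\up[\Res_{\mc A}],\Res_{\mc A},\res_{\mc A})$ associated with $\res_{\mc A}$ in the poset $(2^S,\subseteq)$; likewise $\alpha_{\mc A'}$ over $2^{S'}$ for $\mc A'$. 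By the convention adopted in Section~\ref{posets} (identifying a function with its one-out graph, and calling such one-out graphs companions when the functions are, i.e.\ when their main skeletons are), the phrase ``$G^0_{\mc A}$ and $G^0_{\mc A'}$ are companions'' means precisely that $\alpha_{\mc A}$ and $\alpha_{\mc A'}$ are companion skeletons in the sense of Definition~\ref{def:graph-companions}.

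With this translation the theorem reduces to the assertion that $G_{\alpha_{\mc A}}\cong G_{\alpha_{\mc A'}}$ if and only if $\alpha_{\mc A}$ and $\alpha_{\mc A'}$ are companions, which is exactly what Theorem~\ref{thm:faithful-functions} and Corollary~\ref{coro:all-skeletons} supply. Concretely, for the ``if'' direction I would assume $G^0_{\mc A}$ and $G^0_{\mc A'}$ are companions, unfold this to: $\alpha_{\mc A}$ and $\alpha_{\mc A'}$ are companion skeletons; then since $G_{\mc A}=G_{\alpha_{\mc A}}$ and $G_{\mc A'}=G_{\alpha_{\mc A'}}$, the implication ``companion skeletons $\Rightarrow$ isomorphic graphs'' of Theorem~\ref{thm:faithful-functions} yields an isomorphism $G_{\mc A}\cong G_{\mc A'}$. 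For the ``only if'' direction I would assume $G_{\mc A}\cong G_{\mc A'}$; since $\alpha_{\mc A}$ is a skeleton over $2^S$ with $G_{\mc A}=G_{\alpha_{\mc A}}$ and $\alpha_{\mc A'}$ is a skeleton over $2^{S'}$ with $G_{\mc A'}=G_{\alpha_{\mc A'}}$, Corollary~\ref{coro:all-skeletons} forces $\alpha_{\mc A}$ and $\alpha_{\mc A'}$ to be companions, hence $G^0_{\mc A}$ and $G^0_{\mc A'}$ are companions.

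The only points that need a line of justification are the two identifications above. For (i) one simply observes that $E(G^0_{\mc A})=\{(v,\res_{\mc A}(v))\mid v\in 2^S\}$ is by definition the edge set of $G_{\res_{\mc A}}$. For (ii) one checks $\out_{G_{\mc A}}(v)=\{w\mid \res_{\mc A}(v)\seq w\}=\up(\res_{\mc A}(v))$, so the out-sets of $G_{\mc A}$ are upper cones whose minimal elements $\res_{\mc A}(v)$ are their natural representatives; thus $G_{\mc A}$ is the graph of the main skeleton fixed by $\res_{\mc A}$, as already noted after the definition of the transition graph. A minor bookkeeping point, handled automatically since Definition~\ref{def:graph-companions} and Theorem~\ref{thm:faithful-functions} are stated for skeletons over possibly distinct sets: if $|S|\ne|S'|$ then $|V(G_{\mc A})|=2^{|S|}\ne 2^{|S'|}=|V(G_{\mc A'})|$, so no isomorphism and no bijection $2^S\to 2^{S'}$ exist, and both sides of the equivalence fail. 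I do not expect a genuine obstacle here: once the two identifications are recorded, the result is, as the surrounding text remarks, an immediate application of Theorem~\ref{thm:faithful-functions} and Corollary~\ref{coro:all-skeletons}.
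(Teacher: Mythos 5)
Your proposal is correct and matches the paper's intended argument exactly: the paper gives no explicit proof, presenting the theorem as an immediate application of Theorem~\ref{thm:faithful-functions} after setting up the same two identifications ($G^0_{\mc A}=G_{\res_{\mc A}}$ and $G_{\mc A}$ as the graph of the main skeleton of $\res_{\mc A}$ in $(2^S,\subseteq)$) that you verify. Your write-up, including the appeal to Corollary~\ref{coro:all-skeletons} for the ``only if'' direction and the remark about unequal background set sizes, simply makes explicit what the paper leaves implicit.
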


We see that there is no obvious structural relationship between two RS's $\mc A$ and $\mc A'$ such that $G_{\mc A}$ and $G_{\mc A'}$ are isomorphic. A basic operation on $0$-context graphs that yields a transition graph isomorphic to that of the original $0$-context graph is
based on the  companion edge swapping on the main skeletons.

Let $\mc A$ be a RS, and
let $x,y\in 2^S$ be two elements such that $x\sim_{\Res_{\mc A}} y$. Consider the pair of edges $(x,\res_{\mc A}(x) )$ and  $(x',\res_{\mc A}(x') )$ in $G^0_{\mc A}$. The graph $G_{x,x'}$ that is obtained from $G^0_{\mc A}$ by swapping the targets of these edges, introducing the new pair  $(x,\res_{\mc A} (x'))$ and  $(x',\res_{\mc A} (x) )$, is again a one-out graph and hence the $0$-context graph $G^0_{\mc A'}$ of a RS ${\mc A'}$ (provided $x,x'$ are unequal to both $\varnothing$ and $S$, see Proposition~\ref{prop:one-out}).

As seen in Section~\ref{posets}, by Lemma~\ref{in-comp} the incoming edges of $u$ and $v$ in $G_{\mc A}$ are equal. Switching outgoing edges of $x$ and $x'$ in $G^0_{\mc A}$ swaps all outgoing edges of $x$ and $x'$ in $G_{\mc A}$. All other vertices in $G^0_{\mc A}$ have
 their edges unchanged, so we can conclude that $G_{\mc A}$ and  $G_{\mc A'}$ are isomorphic.

\begin{exam} \label{ex:edge-swap}
Reconsider the RS ${\mc A}$ from Example~\ref{ex:tour-example}, see Figure~\ref{fig:zero-context} for its $0$-context graph $G^0_{\mc A}$. The elements $\{1,3\}$ and $\{1,3,4\}$ are companions with respect to $\Res_{\mc A}$. After companion edges switching we obtain a $0$-context graph  $G_{\mc A'}$ with a single component; it has no cycles except for the (unavoidable) loop at $\varnothing$. The original $G_{\mc A}$ has two components.
The transition graphs $G_{\mc A}$ and $G_{\mc A'}$ are isomorphic.
\qed
\end{exam}

\begin{figure}[h]
\begin{tikzpicture}[scale =0.7]
%\tikzstyle{every node}=[draw,rectangle,rounded corners]
%\tikzstyle{every edge}=[draw,->,>=stealth,shorten >=3pt,shorten <=3pt]
\tikzstyle{every edge}=[draw,->,>=stealth]
  \node (0) at (9.0,4.0) {$\varnothing$} ;
  \node (a) at (4.0,3.0) {$\{1\}$} ;
  \node (b) at (6.5,2.0) {$\{2\}$} ;
  \node (c) at (16.0,2.0) {$\{3\}$} ;
  \node (d) at (0.0,3.0) {$\{4\}$} ;
  \node (ab) at (2.0,4.0) {$\{1,2\}$} ;
  \node (ac) at (6.5,4.0) {$\{1,3\}$} ;
  \node (ad) at (-0.5,5.0) {$\{1,4\}$} ;
  \node (bc) at (4.0,5.0) {$\{2,3\}$} ;
  \node (bd) at (14.0,5.0) {$\{2,4\}$} ;
  \node (cd) at (16.5,4.0) {$\{3,4\}$} ;
  \node (abc) at (11.0,4.0) {$\{1,2,3\}$} ;
  \node (abd) at (14.0,3.0) {$\{1,2,4\}$} ;
  \node (acd) at (11.0,2.0) {$\{1,3,4\}$} ;
  \node (bcd) at (2.0,2.0) {$\{2,3,4\}$} ;
  \node (S) at (11.0,6.0) {$\{1,2,3,4\}$};
  \path[every node/.style={draw=none}]
%%       (0) edge node[auto] {} (a)
       (0) edge [loop,in=135,out=90, min distance=1.5cm]  (0)
       (a) edge (b)
       (b) edge (ac)
       (c) edge (abd)
       (d) edge (ab)
       (ab) edge (bc)
       (ac) edge [dashed] (0)
       (ad) edge (ab)
       (bc) edge (a)
       (bd) edge (abc)
       (cd) edge (abd)
       (abc) edge (0)
       (abd) edge (abc)
       (acd) edge [out=170, in=-45, dashed] (bc)
       (bcd) edge (a)
       (S) edge (0)
           ;
\end{tikzpicture}
\caption{The $0$-context graph for a RS equivalent to the one in  Example~\ref{ex:tour-example} after swapping companion edges. Dashed edges are the replacements of the two original edges (compare to Fig.~\ref{fig:zero-context}).
} \label{fig:zero-context-swapped}
\end{figure}
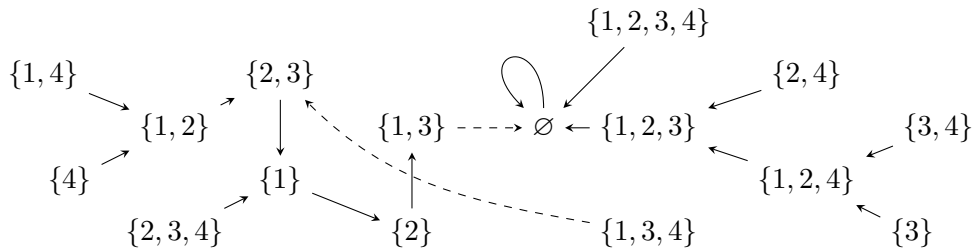

%
%\begin{obse} \label{obs:cycles-differ}
%The number of cycles and their length may differ in companion graphs. See Example~\ref{ex:edge-swap}.
%\end{obse}
%
%
%%%%%%%%%%%%%%%%
%\section{Discussion}
%%%%%%%%%%%%%%%%
%
%
%
%
%global dynamics vs. one-out graphs!
%cycles are irrelevant, so is the number of components.
%

%% ===============================================

\end{document}